\theoremstyle{plain}
    \newtheorem{Thm}{\bf Theorem}[section]
    \newtheorem{Prop}[Thm]{\bf Proposition}
    \newtheorem{Lma}[Thm]{\bf Lemma}
    \newtheorem{Cor}[Thm]{\bf Corollary}
    \newtheorem{Q}[Thm]{\bf Question}
\theoremstyle{remark}
    \newtheorem{Rem}[Thm]{\bf Remark}
    \newtheorem{Exa}[Thm]{\bf Example}
\theoremstyle{definition}
    \newtheorem{Def}[Thm]{\bf Definition}
\numberwithin{equation}{section}
\DeclareFontFamily{OMS}{rsfs}{\skewchar\font'60}
\DeclareFontShape{OMS}{rsfs}{m}{n}{<-5>rsfs5 <5-7>rsfs7 <7->rsfs10 }{}
\DeclareSymbolFont{rsfs}{OMS}{rsfs}{m}{n}
\DeclareSymbolFontAlphabet{\scr}{rsfs}
\newcommand{\mlabel}[1]%
  {\mbox{}\marginpar{\raggedleft\hspace{0pt}{\rm\ttfamily#1}}\label{#1}}
\newcommand{\excise}[1]{}
\title{\bf On the Frobenius Complexity of Stanley-Reisner Rings}
\author[I.~Ilioaea]{Irina Ilioaea}
\address{Department of Mathematics and Statistics, Georgia State University, Atlanta GA 30303}
\email{iilioaea1@gsu.edu}
\date{}
\begin{document}

\maketitle

\begin{abstract}
The Frobenius complexity of a local ring $R$ measures asymptotically the abundance of Frobenius operators of order $e$ on the injective hull of the residue field of $R$.
 It is known that, for Stanley-Reisner rings, the Frobenius complexity is either $-\infty$ or $0$. 
This invariant is determined by the complexity sequence $\{c_ e\}_e $ of the ring of Frobenius operators on the injective hull of the residue field. We will show that $\{c_ e\}_e $ is constant for $e\geq 2,$ generalizing work of \` Alvarez Montaner, Boix and Zarzuela. Our result settles an open question mentioned by \` Alvarez Montaner in~\cite{Alvarez}.

\end{abstract}

\section{Introduction}
Let $(R,m,k)$ be a Noetherian, local ring of positive characteristic $p,$ where $p$ is prime.
 The study of Frobenius operators on $R-$modules has played an important role in commutative algebra in the past several decades with applications to tight closure theory and singularities of local rings. In particular, Lyubeznik and Smith studied the algebra of Frobenius operators on the injective hull of the residue field of $R$ in connection with the localization problem in tight closure theory. They asked the following question in~\cite{LS}:
\begin{Q} [Lyubeznik, Smith] 
Is $\mathcal{F}(E_R)$ always finitely generated as a ring over $R$?
\end{Q}
In 2009, in~\cite{Kat}, Katzman gave an example of a Stanley-Reisner ring $R$ such that $\mathcal{F}(E_R)$ is not finitely generated as a ring over $R$. Subsequently, \` Alvarez Montaner, Boix and Zarzuela have completely described the situation in the case of Stanley-Reisner rings in~\cite{Boix}:


\begin{Thm} [\`Alvarez Montaner, Boix and Zarzuela] 
The Frobenius algebra $\mathcal{F}(E_R)$ associated to a Stanley-Reisner ring $R$ is either principally generated or infinitely generated.
\end{Thm}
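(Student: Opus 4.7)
The plan is to translate generation of $\mathcal{F}(E_R)$ into a combinatorial question about colon ideals of $I_\Delta$. Passing to the $\fm$-adic completion (which preserves $E_R$ and hence $\mathcal{F}(E_R)$), write $\widehat{R} = S/I_\Delta$ with $S = k[[x_1,\ldots,x_n]]$. By the Lyubeznik--Smith identification for quotients of regular local rings, one has a graded isomorphism of $k$-algebras
\[
\mathcal{F}(E_R) \;\cong\; \bigoplus_{e\geq 0} \frac{(I_\Delta^{[p^e]}:I_\Delta)}{I_\Delta^{[p^e]}},
\]
in which the product of $\bar u$ in degree $f$ with $\bar v$ in degree $e$ is represented by $u\cdot v^{p^f}$. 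It therefore suffices to show that the right-hand graded $R$-algebra is either generated by a single element in degree one, or requires a minimal generator in infinitely many degrees.

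Next, I would exploit the fact that $I_\Delta$ is squarefree monomial. The minimal monomial generators of $(I_\Delta^{[p^e]}:I_\Delta)$ modulo $I_\Delta^{[p^e]}$ can be written in the form $\mathbf{x}^{\alpha(F,e)}$, where each coordinate of $\alpha(F,e)$ is either $p^e-1$ or $0$, and $F\subseteq\{1,\ldots,n\}$ ranges over a fixed indexing family $\mathcal{S}(\Delta)$ determined by the minimal non-faces of $\Delta$ (and independent of $e$). The full set $\{1,\ldots,n\}$ always lies in $\mathcal{S}(\Delta)$, producing the \emph{top} generator $(x_1\cdots x_n)^{p^e-1}$; under the twisted multiplication this single element generates a subalgebra of $\mathcal{F}(E_R)$ isomorphic to the Frobenius algebra of $S$, which is precisely the principal part. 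I would call any other minimal generator \emph{exceptional}.

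The dichotomy is then reduced to an exponent-vector inspection. Under the twisted multiplication, the product of two generators indexed by $F$ in degree $f$ and $G$ in degree $e-f$ has exponent vector $(p^f-1)\mathbf{1}_F + p^f(p^{e-f}-1)\mathbf{1}_G$, and a direct comparison with $(p^e-1)\mathbf{1}_H$ shows this can only land on an exceptional target $H\in\mathcal{S}(\Delta)$ when one of the factors is the top generator---in which case the exceptional subset is merely carried along from the other factor, producing nothing new. Hence if $\mathcal{S}(\Delta) = \{\{1,\ldots,n\}\}$ then $\mathcal{F}(E_R)$ is principally generated by the degree-one top element; while if $\mathcal{S}(\Delta)$ contains even a single proper subset $F_0$, the corresponding exceptional generator reappears in every degree $e\geq 1$ and cannot be expressed as a product of lower-degree pieces, forcing infinite generation.

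The main obstacle is the last step: pinning down the family $\mathcal{S}(\Delta)$ precisely (via an Alexander-dual description of the colon $(I_\Delta^{[p^e]}:I_\Delta)$) and then rigorously verifying the exponent-vector rigidity, i.e.\ ruling out every potential way in which the twisted multiplication could combine two proper-subset generators into an exceptional generator of higher degree. Once this combinatorial rigidity is in hand, the dichotomy follows immediately, and a fortiori the theorem.
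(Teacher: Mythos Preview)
Your overall architecture matches that of \`Alvarez Montaner--Boix--Zarzuela as summarized in the paper: use the identification $\mathcal{F}^e(E_R)\cong (I^{[q]}:I)/I^{[q]}$, describe the monomial generators of the colon ideal via the primary decomposition of $I$, isolate the ``top'' generator $(\mathbf{x}^1)^{q-1}$, and argue that any further generators persist in every degree. However, your description of those further generators is wrong in a way that breaks the exponent-vector argument. You claim that the minimal monomial generators of $(I^{[q]}:I)$ modulo $I^{[q]}$ have entries only in $\{0,\,p^e-1\}$, indexed by a family of supports $\mathcal{S}(\Delta)$. In fact, from $(I^{[q]}:I)=\bigcap_i\bigl(I_{\alpha_i}^{[q]}+((x^{\alpha_i})^{q-1})\bigr)$ the entries lie in $\{0,\,q-1,\,q\}$ (Remark~2.3), and Lemma~2.4 shows that every minimal generator of $J_q$ (those not already in $I^{[q]}+((\mathbf{x}^1)^{q-1})$) must carry at least one entry equal to $q$, one equal to $q-1$, and one equal to $0$. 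For instance, with $I=(x_1x_5,x_2x_5,x_2x_3,x_2x_4)$ one has $x_1^{q-1}x_2^{q-1}x_5^{\,q}\in J_q$, which is not in $I^{[q]}$ despite the exponent $q$ on $x_5$. So exceptional generators are not indexed by supports alone, and the twisted product of two of them does not have the shape $(p^f-1)\mathbf{1}_F+p^f(p^{e-f}-1)\mathbf{1}_G$ you analyze.

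This is not cosmetic: once $q$-entries appear, ruling out that a product $m_{\beta_1}m_{\beta_2}^{p^{\beta_1}}\cdots m_{\beta_s}^{p^{\beta_1+\cdots+\beta_{s-1}}}$ of lower-degree exceptional generators (possibly with nested supports) divides a given $x^{\gamma_e}\in J_q$ requires tracking how the $q$ versus $q-1$ positions interact across factors. Doing this for \emph{every} generator of $J_q$ is exactly the content of the paper's Theorem~3.5, whose proof relies on Lemmas~3.1 and~3.3. For the dichotomy statement itself it would suffice to handle a single generator of $J_q$---say one of minimal support, where $M_\beta(\gamma)$ is principal and a degree count in a variable with exponent $q$ gives the contradiction---but even that argument needs the correct $\{0,q-1,q\}$ exponent model, not your $\{0,q-1\}$ one.
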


In both papers, ~\cite{Boix} and ~\cite{Kat}, the infinite generation of the Frobenius algebra has been established by exhibiting, for $e>>0,$ a Frobenius operator of order $e,$ with the property that it is not part of the subring generated by the Frobenius operators of order strictly less than $e.$ In this paper, I will describe all operators with this property, for all $e.$

Enescu and Yao were motivated by the finite generation question to introduce a new invariant of a local ring of prime characteristic, called the Frobenius complexity in~\cite{EY}. This gives a systematic
way of measuring the abundance of Frobenius actions on the injective hull of the residue field of a local ring. In the case when this Frobenius algebra is infinitely generated over $R$, they used the complexity sequence in order to describe how far it is from being finitely generated. The complexity sequence counts the number of algebra generators in each degree which cannot be obtained from elements of lower degree.
From this perspective, the main result of this note provides the full description of the complexity sequence of the Frobenius algebra of any Stanley-Reisner ring.
This settles an open question mentioned by \`Alvarez Montaner in~\cite{Alvarez}, page $318.$




Let us introduce the main definitions and terminology relevant to the paper.
From now on, let $k$ be a field of characteristic $p$ and $(R, m, k)$ a complete local ring.

Let $F^e : R \to R$ be the $e$th iteration of the Frobenius map, that is $F^e(r)=r^q$, where $q=p^e, e\in \mathbb{N}$.

For any $e\geq 0$, we let $R^{(e)}$ be the $R$-algebra defined as follows: as a ring $R^{(e)}$ equals $R$ while the $R$-algebra structure is defined by $rs = r^q s$, for all $r\in R$, $s\in R^{(e)}.$ In the same way, starting with an $R$-module M, we can define a new $R$-module $M^{(e)}.$ 

Let $E_R:=E_R(k)$ denote the injective hull of the residue field $k$.
An $e$th Frobenius operator(or $e$th Frobenius action) of $E_R$ is an additive map $\phi: E_R \to E_R$ such that $\phi(rz)=r^q \phi(z)$, for all $r\in R$ and $z\in E_R$. The collection of $e$th Frobenius actions on $E_R$ is an $R$-module, denoted by $\mathcal{F}^e(E_R)$.

\begin{Def}
The algebra of the Frobenius operators on $E_R$ is defined by
\begin{center}
	$\mathcal{F}(E_R)=\oplus_{e\geq 0}\mathcal{F}^e(E_R)$.
\end{center} 
\end{Def}
This is a $\mathbb{N}$-graded noncommutative ring under composition of maps and due to Matlis duality, its zero degree component is $R$.

Let $G_e:=G_e(\mathcal{F}(E_R))$ be the subring of $\mathcal{F}(E_R)$ generated by elements of degree less or equal to $e$.
Note that $G_{e-1}\subseteq G_{e}$, for all $e\geq 1.$
Moreover, $(G_e)_i = \mathcal{F}(E_R)_i$, for all $0\leq i\leq e$ and $(G_{e-1})_{e}\subseteq \mathcal{F}(E_R)_{e}.$

We will denote the minimal number of homogeneous generators of $G_e$ as a subring of $\mathcal{F}(E_R)$ over $\mathcal{F}(E_R)_0=R$ by $k_e.$ The sequence $\{k_e\}_e$ is called the {\it growth} sequence for $\mathcal{F}(E_R)$. By convention, $k_{-1}=0.$ Note that $k_0=0.$

\begin{Prop}[Enescu, Yao]

The minimal number of generators of the $R$-module $\displaystyle\frac{\mathcal{F}(E_R)_{e}}{(G_{e-1})_{e}}$ equals $k_{e}-k_{e-1},$ for all $e\geq 1$.
\end{Prop}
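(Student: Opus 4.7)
My plan is to compare minimal homogeneous generating sets of the nested subrings $G_{e-1}\subseteq G_{e}$ and track exactly what new data is introduced in passing from one to the other. Two facts from the grading drive the argument: first, that $(G_e)_i=\mathcal{F}(E_R)_i=(G_{e-1})_i$ for every $0\le i\le e-1$ (recorded in the excerpt); second, that in the $\mathbb{N}$-graded ring $\mathcal{F}(E_R)$ a product of two positive-degree elements lands in a degree equal to the sum of their degrees, hence strictly larger than either factor.

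I would fix a minimal homogeneous generating set $S$ of $G_e$ over $R$, of cardinality $k_e$, and split $S=S'\sqcup S''$, where $S'$ consists of the elements of degree $\le e-1$ and $S''$ of the elements of degree exactly $e$. The first step is to show that $S'$ generates $G_{e-1}$ as an $R$-subalgebra: any monomial in the elements of $S$ that involves some member of $S''$ has total degree $\ge e$, so the subring generated by $S'$ meets every graded piece of degree $i\le e-1$ in all of $\mathcal{F}(E_R)_i$ and must equal $G_{e-1}$. Consequently $|S'|\ge k_{e-1}$; if this inequality were strict, replacing $S'$ by a minimal generating set of $G_{e-1}$ would produce a generating set of $G_e$ of cardinality less than $k_e$, contradicting the definition of $k_e$. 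Hence $|S'|=k_{e-1}$ and $|S''|=k_e-k_{e-1}$.

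Next I would use the grading in degree $e$: for each $\phi\in S''$ any product $\phi\cdot\psi$ with $\deg\psi\ge 1$ has degree strictly greater than $e$, so the only contributions of $S''$ to $(G_e)_e$ come from left $R$-multiples of a single $\phi\in S''$, giving
\[
(G_e)_e=(G_{e-1})_e+\sum_{\phi\in S''}R\phi.
\]
Since $(G_e)_e=\mathcal{F}(E_R)_e$, the images of the elements of $S''$ generate the $R$-module $\mathcal{F}(E_R)_e/(G_{e-1})_e$, so its minimal number of generators is at most $k_e-k_{e-1}$. For the reverse inequality, suppose $\mathcal{F}(E_R)_e/(G_{e-1})_e$ can be generated as an $R$-module by $m<k_e-k_{e-1}$ elements; lifting these to homogeneous elements of degree $e$ in $\mathcal{F}(E_R)_e$ and adjoining them to a minimal generating set of $G_{e-1}$ yields a homogeneous $R$-algebra generating set of $G_e$ of cardinality $k_{e-1}+m<k_e$, contradicting the minimality of $k_e$. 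The only step requiring care is the displayed identity; once one recognizes that no product of two positive-degree pieces can remain in degree $e$ unless one factor has degree zero, the remainder is pure bookkeeping between ring and module generators.
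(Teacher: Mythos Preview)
Your argument is correct. The paper itself does not prove this proposition: it is stated with attribution to Enescu and Yao and cited from~\cite{EY} without proof, so there is no in-paper argument to compare against. Your bookkeeping between minimal homogeneous ring generators of $G_{e-1}\subseteq G_e$ and $R$-module generators of $\mathcal{F}(E_R)_e/(G_{e-1})_e$ is the standard approach and goes through as written; the only point worth making explicit is that in this noncommutative graded ring one has $\phi\, r=r^{p^e}\phi$ for $r\in R=\mathcal{F}^0(E_R)$ and $\phi\in\mathcal{F}^e(E_R)$, so every word in the generators with interspersed scalars can be rewritten as a left $R$-multiple of a pure monomial, which is what justifies your displayed identity $(G_e)_e=(G_{e-1})_e+\sum_{\phi\in S''}R\phi$.
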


The following definitions are central to our paper.

\begin{Def}\label{complexity}
\begin{itemize}
\item[{(i)}]  The {\it complexity sequence} of $\mathcal{F}(E_R)$ is given by $\{c_{e}= k_{e}-k_{e-1}\}_e$. Note that $c_0=0.$
The {\it complexity} of $\mathcal{F}(E_R)$ is 
\begin{center}
	$cx(\mathcal{F}(E_R)) = inf \{ n>0: c_{e} = O(n^e)\}.$
\end{center}
It is obvious that $cx(\mathcal{F}(E_R))=0$ when $\mathcal{F}(E_R)$ is finitely generated as a ring over $R.$

\item[{(ii)}] The {\it Frobenius complexity} of the ring $R$ is defined by 
\begin{center}
	$cx_F(R) = log_p(cx(\mathcal{F}(E_R))).$
	
\end{center}
\end{itemize}
\end{Def}
Under the notations above, Theorem~\ref{main} of this note shows that the complexity sequence $\{c_{e}\}_e$ of the Frobenius algebra of operators on the injective hull of the residue field of any Stanley-Reisner ring stabilizes starting with $e=2.$
\section{Preliminary Results}

In order to investigate the finite generation of the Frobenius algebra of operators on the injective hull, we will use a consequence of a result by Fedder in~\cite{Fed}. This gives us a nice description of the Frobenius algebra for a quotient of a regular ring.

\begin{Prop} [Fedder]\label{Fedder}
Let $k$ be a field of characteristic $p$, $S=k[[x_1,\ldots,x_n]]$ and $q=p^e$, for $e\geq 0$.
Let $I\leq S$ be an ideal in $S$ and $R=S/I$.

There exists an isomorphism of $R$-modules:
\begin{center}
	$\mathcal{F}^e(E_R)\cong \displaystyle\frac{I^{[q]}:_S I}{I^{[q]}}.$
\end{center}
Therefore, 
\begin{center}
$\mathcal{F}(E_R)\cong \displaystyle\bigoplus_{e\geq 0}\displaystyle\frac{I^{[q]}:_S I}{I^{[q]}}.$
\end{center}
\end{Prop}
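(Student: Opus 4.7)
The plan is to use Matlis duality to transfer the computation of $\mathcal{F}^e(E_R)$ to a calculation inside the regular ring $S$, where Frobenius operators on the injective hull are understood completely.

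First, I would identify $E_R \cong (0 :_{E_S} I)$ as a submodule of $E_S$; this is the standard Matlis-duality translation of $R = S/I$. Second, I would record that for the complete regular local ring $S$, the module $\mathcal{F}^e(E_S)$ is free of rank one over $S$, generated by a canonical $e$-th Frobenius action $F^e_S$ (for instance the natural action on $H^n_{\mathfrak m}(S) \cong E_S$). The crucial property of $F^e_S$ that I would exploit is the annihilator identity $\operatorname{Ann}_S(F^e_S(z)) = (\operatorname{Ann}_S z)^{[q]}$ for every $z \in E_S$; in particular $F^e_S$ carries $(0 :_{E_S} I)$ into $(0 :_{E_S} I^{[q]})$.

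Given an element $s \in S$, I would then determine when the operator $z \mapsto s F^e_S(z)$ sends $E_R$ to $E_R$, and when it vanishes on $E_R$. Using the annihilator identity together with the ideal/submodule correspondence under Matlis duality, both conditions translate into ideal containments: the operator preserves $E_R$ iff $sI \subseteq I^{[q]}$, i.e.\ $s \in I^{[q]} :_S I$, and it vanishes on $E_R$ iff $s$ annihilates $(0 :_{E_S} I^{[q]})$, i.e.\ $s \in I^{[q]}$. This yields a well-defined injective $R$-module map $(I^{[q]} :_S I)/I^{[q]} \hookrightarrow \mathcal{F}^e(E_R)$.

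For surjectivity I would show that every $e$-th Frobenius action on $E_R$ is the restriction of some $s \cdot F^e_S$. The cleanest route is via Matlis duality: reinterpret $\mathcal{F}^e(E_R)$ as an appropriately twisted $\operatorname{Hom}_S(R, R^{(e)})$, and then compute this Hom between cyclic $S$-modules as $(I^{[q]} :_S I)/I^{[q]}$. The main obstacle is handling this duality carefully: a Frobenius operator is only additive and $F^e$-semilinear, so the lifting step must be phrased through an honest $S$-linear map, with scrupulous bookkeeping of the Frobenius-twisted module structures on both sides and a check that the resulting correspondence respects the $R$-module structure, not merely the additive one. Once this bookkeeping is in place, combining it with the previous two steps completes the proof.
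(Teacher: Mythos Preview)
The paper does not prove this proposition; it is stated as a known consequence of Fedder's work and cited from~\cite{Fed} without argument. So there is no ``paper's own proof'' to compare against.

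Your outline is the standard route to this isomorphism and is correct in substance. The identification $E_R \cong (0:_{E_S} I)$, the fact that $\mathcal{F}^e(E_S)$ is free of rank one over $S$ generated by the natural Frobenius on $H^n_{\mathfrak m}(S)$, and the translation of ``preserves $E_R$'' and ``vanishes on $E_R$'' into the colon conditions $s \in I^{[q]}:_S I$ and $s \in I^{[q]}$ are exactly the ingredients used in the literature (see, e.g., Katzman's and Blickle's treatments, as well as Fedder's original argument). The only place to be careful, as you yourself flag, is the surjectivity step: one must genuinely argue that every $p^e$-linear map $E_R \to E_R$ extends to a $p^e$-linear map $E_S \to E_S$, which is usually done either by the injectivity of $E_S$ in the category of $S^{(e)}$-modules or, equivalently, via the Matlis-dual computation $\Hom{S}{R}{R^{(e)}} \cong (I^{[q]}:_S I)/I^{[q]}$. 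Once that is in place your argument is complete.
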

In~\cite{Kat}, Katzman described the $e$th Frobenius actions that come from Frobenius actions of lower degree $e',$ with $e'<e$.
Using the notations in Proposition~\ref{Fedder}, for any $e\geq 0$ denote $K_e:= (I^{[p^e]}:_S I)$ and 
\begin{center}
	$L_e := \displaystyle\sum_{1\leq \beta_1,\ldots,\beta_s< e, 
		\beta_1+\ldots+\beta_s=e} K_{\beta_1} K_{\beta_2}^{[p^{\beta_1}]}\cdots K_{\beta_s}^{[p^{\beta_1+\cdots+\beta_{s-1}}]}.$
\end{center}
\begin{Prop} [Katzman]  
For any $e\geq 1$, let $\mathcal{F}_{<e}$ be the $R$-subalgebra of $\mathcal{F}(E_R)$ generated by $\mathcal{F}^0(E_R),\ldots,\mathcal{F}^{e-1}(E_R).$ Then 
\begin{center}
	$\mathcal{F}_{<e}\cap\mathcal{F}^e(E_R) = L_e .$
\end{center}
\end{Prop}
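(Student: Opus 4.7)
The plan is to work inside Fedder's model (Proposition~\ref{Fedder}). Under the isomorphism $\mathcal{F}^e(E_R)\cong K_e/I^{[p^e]}$, a class $[u]$ with $u\in K_e$ represents the operator $\phi_u$ obtained by restricting $u\cdot F^e$ to $E_R=(0:_{E_S}I)\subseteq E_S$, where $F$ is the natural Frobenius action on the injective hull $E_S$ of $k$ as an $S$-module. The condition $u\in K_e$ is precisely what guarantees that $u\cdot F^e$ preserves the submodule $E_R$.

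The first step is to derive the composition rule. For $u\in K_a$ and $v\in K_b$, the commutation $F^a\cdot v=v^{p^a}\cdot F^a$ on $E_S$ gives
\[
\phi_u\circ\phi_v=(uF^a)(vF^b)=u\cdot v^{p^a}\cdot F^{a+b},
\]
so $\phi_u\circ\phi_v$ corresponds to $u\cdot v^{[p^a]}\in K_{a+b}$ modulo $I^{[p^{a+b}]}$. Iterating, for $\phi_{u_i}\in\mathcal{F}^{\beta_i}(E_R)$ with $i=1,\ldots,s$, the composite $\phi_{u_1}\circ\cdots\circ\phi_{u_s}$ corresponds, modulo $I^{[p^e]}$ (where $e=\sum\beta_i$), to
\[
u_1\cdot u_2^{[p^{\beta_1}]}\cdot u_3^{[p^{\beta_1+\beta_2}]}\cdots u_s^{[p^{\beta_1+\cdots+\beta_{s-1}}]}.
\]

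The second step reads off the degree $e$ part of $\mathcal{F}_{<e}$. By definition, $\mathcal{F}_{<e}$ is generated as an $R$-subalgebra of $\mathcal{F}(E_R)$ by $\mathcal{F}^i(E_R)$ for $1\le i<e$, so any homogeneous degree $e$ element is an $R$-linear combination of composites $\phi_{u_1}\circ\cdots\circ\phi_{u_s}$ with $1\le \beta_i<e$ and $\sum\beta_i=e$. Letting the $u_i$ range over $K_{\beta_i}$ and the decompositions vary, the composition rule shows that these composites generate exactly $L_e$, read modulo $I^{[p^e]}$. Combining the two directions yields $\mathcal{F}_{<e}\cap\mathcal{F}^e(E_R)=L_e$.

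The main obstacle is the initial setup: verifying the commutation identity $F^a\cdot v=v^{p^a}\cdot F^a$ on $E_S$ and showing that $uF^a$ restricts to a well-defined operator on $E_R$ if and only if $u\in K_a$. Once this is secured, the composition formula follows by a short induction on $s$, and the identification with $L_e$ is pure bookkeeping. A subtle point worth flagging is that the equality only holds modulo $I^{[p^e]}$, which is why $L_e$ must be summed over \emph{all} compositions $\beta_1+\cdots+\beta_s=e$ rather than a single canonical decomposition as the $u_i$ vary.
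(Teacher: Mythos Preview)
The paper does not supply its own proof of this proposition: it is quoted as a result of Katzman (reference~\cite{Kat}) and stated without argument, then used to obtain the formula $c_e=\mu_S\bigl((I^{[q]}:_SI)/(L_e+I^{[q]})\bigr)$. Your outline is the standard (and correct) route to this identity: realize $\mathcal{F}^e(E_R)$ via Fedder's description, check the composition law $\phi_u\circ\phi_v\leftrightarrow u\,v^{[p^a]}$ on representatives, and conclude by reading off the degree-$e$ homogeneous part of the subalgebra generated in lower degrees. This is exactly what Katzman does, so there is nothing to compare against within the present paper.

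One small clarification: as you yourself flag, the identification is really $\mathcal{F}_{<e}\cap\mathcal{F}^e(E_R)\cong (L_e+I^{[p^e]})/I^{[p^e]}$, since in general $I^{[p^e]}\not\subseteq L_e$. The paper silently makes this identification in the line following the proposition, so your reading is consistent with how the result is used.
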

Therefore, $(G_{e-1})_e\cong\displaystyle\frac{L_e+I^{[q]}}{I^{[q]}} $ and  $c_e=\mu_S\left(\displaystyle\frac{I^{[q]}:_S I}{L_e+I^{[q]}}\right).$ 

\vskip 0.4cm
Let $\textbf{x}^1$ denote the product of all the variables, i.e.
$\textbf{x}^1=x_1\cdots x_n.$
\begin{Def}
We define $J_q$ to be the unique minimal monomial ideal satisfying the equality
\begin{center}
	$(I^{[q]}: I) = I^{[q]} + J_q + (\textbf{x}^1)^{q-1}.$
\end{center}

\end{Def}

I would like to present the structure of the ideal $J_q$ for a Stanley-Reisner ring such that the simplicial complex associated to it has no isolated vertices. For the remaining part of this section, we are working in this context.

Let $k$ be a field of characteristic $p$, $S=k[[x_1,\ldots,x_n]]$ and $q=p^e$, for $e\geq 0$.
Let $I\leq S$ be a square-free monomial ideal in $S$ and $R=S/I$ the Stanley-Reisner ring associated to $I$.

Let $\alpha_k=(\alpha_{k1},\ldots,\alpha_{kn})\in \{0,1\}^n$, $1\leq k\leq r$, be distinct vectors. The support of the vector $\alpha_k$ is defined as $ supp(\alpha_k)=\{i: \alpha_{ki}=1\}.$

We consider $I_{\alpha_k}=(x_i: i\in supp(\alpha_k))$, for every $1\leq k\leq r$ and $x^{\alpha_k}=x_1^{\alpha_{k1}}\cdots x_n^{\alpha_{kn}}$ such that $I_{\alpha_1}+ I_{\alpha_2}+\cdots+ I_{\alpha_r}=(x_1,\ldots, x_n).$

In~\cite{Boix}, \` Alvarez Montaner, Boix and Zarzuela presented a formula to compute the colon ideals $(I^{[q]}: I)$ based on the minimal primary decomposition of the ideal $I$.
\begin{Prop} [\` Alvarez Montaner, Boix and Zarzuela]
If $I = I_{\alpha_1}\cap I_{\alpha_2}\cap\ldots\cap I_{\alpha_r}$ is the minimal primary decomposition of the ideal $I$, then
$$(I^{[q]}:_S I)= (I_{\alpha_1}^{[q]}:_S I_{\alpha_1})\cap\ldots\cap (I_{\alpha_r}^{[q]}:_S I_{\alpha_r}) = (I_{\alpha_1}^{[q]}+(x^{\alpha_1})^{q-1})\cap\cdots\cap(I_{\alpha_r}^{[q]
}+(x^{\alpha_r})^{q-1}).$$

\end{Prop}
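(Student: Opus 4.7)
The statement contains two equalities; I will outline a plan for each.

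For the equality $(I^{[q]}:_S I)= \bigcap_k (I_{\alpha_k}^{[q]}:_S I_{\alpha_k})$, my plan is to begin with the observation that, since $S$ is regular, the Frobenius $F\colon S\to S$ is flat by Kunz's theorem, so it commutes with finite intersections of ideals: $I^{[q]} = \bigl(\bigcap_k I_{\alpha_k}\bigr)^{[q]} = \bigcap_k I_{\alpha_k}^{[q]}$. (For monomial ideals this is also easily checked directly, since taking intersections and taking $q$-th Frobenius powers both act on exponent vectors componentwise.) Combined with the elementary identity $\bigl(\bigcap_k J_k\bigr):_S I = \bigcap_k (J_k:_S I)$, this reduces matters to proving $(I_{\alpha_k}^{[q]}:_S I) = (I_{\alpha_k}^{[q]}:_S I_{\alpha_k})$ for each $k$. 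The inclusion $\supseteq$ is immediate from $I\subseteq I_{\alpha_k}$. For the reverse inclusion, I will use irredundancy of the primary decomposition to choose a monomial $g_k\in \bigcap_{j\neq k}I_{\alpha_j}$ with $g_k\notin I_{\alpha_k}$, so that $g_k x_i\in I$ for every variable $x_i$ generating $I_{\alpha_k}$. Given $f\in (I_{\alpha_k}^{[q]}:_S I)$, this yields $fg_k x_i\in I_{\alpha_k}^{[q]}$; since $I_{\alpha_k}^{[q]}$ is generated by a regular sequence of pure powers of variables it is $I_{\alpha_k}$-primary, and $g_k\notin I_{\alpha_k}$ then forces $fx_i\in I_{\alpha_k}^{[q]}$. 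Ranging over the generators $x_i$ of $I_{\alpha_k}$ gives $f\in (I_{\alpha_k}^{[q]}:_S I_{\alpha_k})$.

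For the equality $(I_{\alpha_k}^{[q]}:_S I_{\alpha_k}) = I_{\alpha_k}^{[q]}+(x^{\alpha_k})^{q-1}$, after relabeling variables it suffices to verify that for $\mathfrak{p}=(x_1,\ldots,x_m)$ one has $(\mathfrak{p}^{[q]}:_S\mathfrak{p}) = \mathfrak{p}^{[q]}+(x_1\cdots x_m)^{q-1}$. Since $\mathfrak{p}^{[q]}$ is monomial, the colon ideal is monomial as well, so it is enough to decide which monomials $x_1^{a_1}\cdots x_n^{a_n}$ satisfy $x_j\cdot x_1^{a_1}\cdots x_n^{a_n}\in\mathfrak{p}^{[q]}$ for every $1\le j\le m$. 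A short case analysis on the exponents $a_1,\ldots,a_m$ finishes the argument: if some $a_i\ge q$ with $i\le m$, the monomial already lies in $\mathfrak{p}^{[q]}$, while in the remaining case one must have $a_j\ge q-1$ for every $j\le m$, placing the monomial in $(x_1\cdots x_m)^{q-1}$.

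The step I anticipate being the main obstacle is the reduction $(I_{\alpha_k}^{[q]}:_S I) = (I_{\alpha_k}^{[q]}:_S I_{\alpha_k})$; everything else is either a standard Kunz-type statement about Frobenius and intersections or an elementary monomial computation. The crux of that step is to combine irredundancy of the primary decomposition (via the monomial witness $g_k$) with the fact that $I_{\alpha_k}^{[q]}$ is primary to its radical $I_{\alpha_k}$, so that $g_k$ acts as a nonzerodivisor modulo $I_{\alpha_k}^{[q]}$ and may be cancelled.
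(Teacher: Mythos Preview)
The paper does not supply its own proof of this proposition; it is quoted from \cite{Boix} and used as a black box. There is therefore no approach in the paper to compare against.

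Your argument is correct. The key points all check out: flatness of Frobenius (or the direct monomial computation) gives $I^{[q]}=\bigcap_k I_{\alpha_k}^{[q]}$; the colon distributes over the intersection in the first variable; and the reduction $(I_{\alpha_k}^{[q]}:_S I)=(I_{\alpha_k}^{[q]}:_S I_{\alpha_k})$ via the irredundancy witness $g_k$ together with the fact that $I_{\alpha_k}^{[q]}$ is $I_{\alpha_k}$-primary is exactly the right idea and is carried out correctly. The monomial computation of $(\mathfrak p^{[q]}:_S\mathfrak p)$ is standard and your case analysis is complete. This is essentially the argument one finds in \cite{Boix}, so there is nothing to add.
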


\begin{Rem}
\label{lcm}
\begin{itemize}
\item[{(i)}] Since the ideals in the intersection are monomial ideals, one can compute the minimal monomial generators of the ideal $(I^{[q]}: I)$ by taking the least common multiples of the minimal monomial generators of the ideals $(I_{\alpha_i}^{[q]}+(x^{\alpha_i})^{q-1}).$

In this way, we can see that the minimal generators $x^{\gamma}=x_1^{\gamma_1}\cdots x_n^{\gamma_n}$ of $(I^{[q]}: I)$ satisfy $\gamma_i\in\{0,q-1,q\}$.

\item[{(ii)}] One can notice that the formula obtained for $(I^{[q]}: I)$ depends only on $q$ and on the vectors $\alpha_i$'s. Since the vectors $\alpha_i$ are invariants of the ideal $I$, we can obtain the minimal monomial generators of $(I^{[q]}: I)$ from the minimal monomial generators of $(I^{[p]}: I)$ by changing $p$ into $q.$ 
\end{itemize}
\end{Rem}

\begin{Exa}\label{ex}
Let $I=(x_1x_5, x_2x_5, x_2x_3, x_2x_4)$.
Then 

	$$(I^{[q]}: I) = ( x_1^qx_5^q , x_2^qx_5^q , x_2^qx_3^q , x_2^qx_4^q , x_1^{q-1}x_2^{q-1}x_5^q ,  x_2^qx_3^{q-1}x_4^{q-1}x_5^{q-1} ,$$

	$$x_1^{q-1}x_2^{q-1}x_4^qx_5^{q-1} , x_1^{q-1}x_2^{q-1}x_3^qx_5^{q-1} , x_1^{q-1}x_2^{q-1}x_3^{q-1}x_4^{q-1}x_5^{q-1})$$
	
and therefore
\begin{center}
	$J_q = ( x_1^{q-1}x_2^{q-1}x_5^q,  x_2^qx_3^{q-1}x_4^{q-1}x_5^{q-1}, x_1^{q-1}x_2^{q-1}x_4^qx_5^{q-1}, x_1^{q-1}x_2^{q-1}x_3^qx_5^{q-1}).$
\end{center}

\end{Exa}
\vskip 0.5cm
\begin{Lma}\label{q}
We have that $J_q\neq 0$ if and only if there exists a generator $x^{\gamma}\in (I^{[q]}: I)$ having $\gamma_i=q$, $\gamma_j=q-1$ and $\gamma_k=0$ for some $1\leq i,j,k\leq n$.
\end{Lma}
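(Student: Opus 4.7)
My plan is to classify the minimal monomial generators of $(I^{[q]}:_S I)$ by the pattern of their exponents and read off membership in $J_q$. By Remark~\ref{lcm}(i), every minimal generator $x^{\gamma}$ of $(I^{[q]}:_S I)$ has $\gamma_i \in \{0,q-1,q\}$. Because $J_q$ is the minimal monomial ideal satisfying $(I^{[q]}:_S I) = I^{[q]} + J_q + (\mathbf{x}^1)^{q-1}$, its minimal generators are exactly the minimal generators of $(I^{[q]}:_S I)$ that lie outside both $I^{[q]}$ and $(\mathbf{x}^1)^{q-1}$. Hence $J_q \neq 0$ is equivalent to the existence of such a minimal generator.

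For the backward direction, assume $x^{\gamma}$ is a minimal generator of $(I^{[q]}:_S I)$ with $\gamma_i = q$, $\gamma_j = q-1$ and $\gamma_k = 0$. The coordinate $\gamma_k = 0$ forces $x^{\gamma} \notin (\mathbf{x}^1)^{q-1}$. If $x^{\gamma}$ were in $I^{[q]}$, then $(x^{\beta})^q$ would divide $x^{\gamma}$ for some minimal generator $x^{\beta}$ of $I$; but $(x^{\beta})^q$ has all exponents in $\{0,q\}$ while $x^{\gamma}$ has a coordinate equal to $q-1$, so $(x^{\beta})^q$ would be a proper divisor of $x^{\gamma}$ lying in $(I^{[q]}:_S I)$, contradicting the minimality of $x^{\gamma}$. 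Hence $x^{\gamma} \in J_q$.

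For the forward direction, let $x^{\gamma}$ be a minimal generator of $J_q$. Since $x^{\gamma} \notin (\mathbf{x}^1)^{q-1}$, some $\gamma_k = 0$, and I separately rule out the two patterns in which either no coordinate equals $q$ or no coordinate equals $q-1$. If no coordinate equals $q$, write $x^{\gamma} = x_T^{q-1}$ with $T = \op{supp}(\gamma) \neq \{1,\ldots,n\}$. For each minimal generator $x^{\beta}$ of $I$, the product $x_T^{q-1} \cdot x^{\beta}$ has exponents reaching $q$ exactly on $T \cap \op{supp}(\beta)$, so its membership in $I^{[q]}$ supplies a minimal generator $x^{\beta'}$ of $I$ with $\op{supp}(\beta') \subseteq T \cap \op{supp}(\beta)$; the pairwise incomparability of supports of minimal generators of $I$ then forces $\beta' = \beta$ and $\op{supp}(\beta) \subseteq T$. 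Varying $\beta$, $T$ contains every variable appearing in any minimal generator of $I$, and the no-isolated-vertex hypothesis (equivalently, every variable belongs to some minimal prime of $I$) forces $T = \{1,\ldots,n\}$, contradicting $\gamma_k = 0$. If no coordinate equals $q-1$, write $x^{\gamma} = x_T^q$; then $x_T^q \cdot x^{\beta} \in I^{[q]}$ for each $x^{\beta}$ yields a minimal generator $x^{\beta'}$ of $I$ with $\op{supp}(\beta') \subseteq T$, and $(x^{\beta'})^q$ divides $x_T^q$ inside $(I^{[q]}:_S I)$. Minimality of $x^{\gamma}$ forces $T = \op{supp}(\beta')$, so $x^{\gamma} = (x^{\beta'})^q \in I^{[q]}$, contradicting $x^{\gamma} \in J_q$.

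The main difficulty lies in the first of the two degenerate cases, where the no-isolated-vertex assumption is essential: without it, the toy ideal $I = (x_1 x_2) \subset k[[x_1,x_2,x_3]]$ already produces a minimal generator $x_1^{q-1} x_2^{q-1}$ of $J_q$ with no exponent equal to $q$, so the hypothesis genuinely cannot be dropped.
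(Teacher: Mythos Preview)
Your proof is correct, but it follows a genuinely different route from the paper's. The paper argues via the primary decomposition: it writes each minimal generator of $(I^{[q]}:_S I)$ as $\operatorname{lcm}(x^{\theta_1},\ldots,x^{\theta_r})$ with $x^{\theta_i}\in I_{\alpha_i}^{[q]}+( (x^{\alpha_i})^{q-1})$, and then reads off directly that if no exponent equals $q$ (resp.\ $q-1$, resp.\ $0$) the monomial lands in $(\mathbf{x}^1)^{q-1}$ (resp.\ $I^{[q]}$, resp.\ $(\mathbf{x}^1)^{q-1}$). You instead bypass the primary decomposition entirely and work from the bare colon condition $x^{\gamma}\cdot I\subseteq I^{[q]}$: multiplying by each minimal generator $x^{\beta}$ of $I$ and tracking which coordinates can reach exponent $q$ forces the support containments that yield your contradictions. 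Your argument is more elementary in that it never invokes Proposition~2.4 or Remark~\ref{lcm}(i) beyond the exponent set $\{0,q-1,q\}$, and your backward direction is more carefully justified than the paper's one-line ``trivial'' (you explain why a minimal generator with a $q-1$ exponent cannot lie in $I^{[q]}$). The paper's route is shorter once the lcm description is in hand; yours has the virtue of making transparent exactly where the no-isolated-vertex hypothesis enters, and your closing example with $I=(x_1x_2)\subset k[[x_1,x_2,x_3]]$ is a useful sharpness remark that the paper does not include.
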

\begin{proof}
It is trivial to see that if there exists $x^{\gamma}\in (I^{[q]}: I) $ with $\gamma_i=q$, $\gamma_j=q-1$ and $\gamma_k=0$ for some $1\leq i,j,k\leq n$, then $J_q\neq 0.$

Let us assume that $J_q\neq 0$. By Remark~\ref{lcm}, if $x^{\gamma}\in J_q$, then $x^{\gamma}=x_1^{\gamma_1}\cdots x_n^{\gamma_n}$ must have $\gamma_i\in\{0,q-1,q\},$ for all $i\in\{1,\ldots,n\}$. 

Moreover, $x^{\gamma}=lcm(x^{\theta_1},\ldots,x^{\theta_r})$, where $x^{\theta_i}\in(I_{\alpha_i}^{[q]}+(x^{\alpha_i})^{q-1}),$ for $i\in\{1,\ldots,r\}$.

If $\gamma_i\neq 0,$ for all $i\in\{1,\ldots,n\}$, then $(\textbf{x}^1)^{q-1}$ divides $x^{\gamma},$ hence $x^{\gamma}\in (\textbf{x}^1)^{q-1}.$

If $\gamma_i\neq q,$ for all $i\in\{1,\ldots,n\}$, then we must have  $x^{\theta_i}\in((x^{\alpha_i})^{q-1})$, for all $i\in\{1,\ldots,r\}$. But that happens only if $x^{\gamma}\in(\textbf{x}^1)^{q-1}.$

If $\gamma_i\neq q-1,$ for all $i\in\{1,\ldots,n\}$, then there exists at least one $x^{\theta_i}\in(I_{\alpha_i}^{[q]})$, hence we have that $x^{\gamma}\in I^{[q]}.$

Therefore, if $J_q\neq 0$, then there exists at least one generator $x^{\gamma}\in J_q$ with $\gamma_i=q$, $\gamma_j=q-1$ and $\gamma_k=0$ for some $1\leq i,j,k\leq n$.
\end{proof}
\vskip 0.5cm
In~\cite{Boix}, \` Alvarez Montaner, Boix and Zarzuela found that there are only four cases that may occur, considering the minimal primary decomposition of the ideal $I$ and the heights of the ideals $I_{\alpha_i}$ :
\begin{Prop}
There are only four posibilities for the minimal generators of $(I^{[q]}: I)$:
\begin{itemize}
  \item[{(i)}] Assume $ht(I_{\alpha_i}) > 1$, for all $i=1,\ldots,r.$
\begin{itemize}
  \item[{(a)}] $(I^{[q]}: I) = I^{[q]} + (\textbf{x}^1)^{q-1}.$
   \item[{(b)}] $(I^{[q]}: I) = I^{[q]} + J_q + (\textbf{x}^1)^{q-1}$, $J_q\subsetneq I^{[q]} + (\textbf{x}^1)^{q-1}. $

\end{itemize}
\item[{(ii)}] Assume $ht(I) = 1$ and and there exists an $i\in \{1,\ldots,r\}$ such that $ht(I_{\alpha_i}) > 1.$

In this case, $(I^{[q]}: I) = J_q + (\textbf{x}^1)^{q-1}$, with $J_q\subsetneq (\textbf{x}^1)^{q-1}. $
\item[{(iii)}] Assume $ht(I_{\alpha_i}) = 1$ for all $i\in \{1,\ldots,r\}.$

Then $(I^{[q]}: I) = (\textbf{x}^1)^{q-1}.$
\end{itemize}
The Frobenius algebra $\mathcal{F}(E_R)$ is principally generated in cases $(i.a)$ and $(iii)$ and is infinitely generated in cases $(i.b)$ and $(ii)$.
\end{Prop}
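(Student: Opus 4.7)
The strategy is to describe the minimal monomial generators of $(I^{[q]}:_S I)$ explicitly using Proposition~2.4 and then sort them into the three pieces $I^{[q]}$, $J_q$, and $(\mathbf{x}^1)^{q-1}$ according to their exponent patterns. For each minimal prime $I_{\alpha_i}$, I would first compute the minimal monomial generators of the factor $I_{\alpha_i}^{[q]} + (x^{\alpha_i})^{q-1}$: when $\height(I_{\alpha_i}) = 1$, with $I_{\alpha_i} = (x_{k_i})$, the factor collapses to the principal ideal $(x_{k_i}^{q-1})$ since $x_{k_i}^{q-1}$ divides $x_{k_i}^q$; when $\height(I_{\alpha_i}) \geq 2$, the minimal generators are $\{x_j^q : j \in supp(\alpha_i)\} \cup \{\prod_{j \in supp(\alpha_i)} x_j^{q-1}\}$. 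The standard lcm rule for intersections of monomial ideals then realizes every minimal generator of $(I^{[q]}:_S I)$ as the lcm of one such choice taken from each factor, which in particular recovers Remark~2.5(i).

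Next I would dispatch the easier cases. For (iii), every factor is principal of the form $(x_{k_i}^{q-1})$, and their lcm equals $(\mathbf{x}^1)^{q-1}$ because the no-isolated-vertex hypothesis forces $\bigcup_i supp(\alpha_i) = \{1, \ldots, n\}$. For (i), the equality $(I^{[q]}:_S I) = I^{[q]} + J_q + (\mathbf{x}^1)^{q-1}$ is the definition of $J_q$ together with the containment $(\mathbf{x}^1)^{q-1} \subseteq (I^{[q]}:_S I)$, which follows because any square-free minimal generator $m$ of $I$ satisfies $m \cdot (\mathbf{x}^1)^{q-1} = m^q \cdot \prod_{j \notin supp(m)} x_j^{q-1} \in I^{[q]}$. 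The split between (i.a) and (i.b) then reduces, via Lemma~2.6, to whether there exists an lcm generator exhibiting the mixed $\{0, q-1, q\}$ pattern.

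Case (ii) is the subtle one where I expect the main work to sit. Let $A = \{i : \height(I_{\alpha_i}) = 1\}$, which is non-empty. The crucial observation is that minimality of the primary decomposition implies $supp(\alpha_i) \not\subseteq supp(\alpha_{i'})$ for $i \neq i'$; in particular, for $i \in A$ with $I_{\alpha_i} = (x_{k_i})$, the index $k_i$ does not appear in any other $supp(\alpha_{i'})$. Hence only the height-one factor $(x_{k_i}^{q-1})$ contributes to the $k_i$-coordinate of any lcm generator, forcing $\gamma_{k_i} = q-1$ exactly. It follows that every minimal generator has at least one coordinate equal to $q-1$, so it cannot sit in $I^{[q]}$, whose monomials have all non-zero coordinates equal to $q$. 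Consequently every minimal generator is either contained in $(\mathbf{x}^1)^{q-1}$ (when no coordinate vanishes) or exhibits the mixed $\{0, q-1, q\}$ pattern and so lies in $J_q$. To close the loop I would verify $I^{[q]} \subseteq J_q + (\mathbf{x}^1)^{q-1}$ by constructing, for each minimal generator $m$ of $I$, a minimal generator of $(I^{[q]}:_S I)$ dividing $m^q$: choose $x_{k_i}^{q-1}$ from each height-one factor (valid since $k_i \in supp(m)$ because $m \in I_{\alpha_i}$) and choose $x_j^q$ with $j \in supp(m) \cap supp(\alpha_i)$ from each factor of height at least two.

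The final claim (principal versus infinite generation) then follows by combining this classification with the earlier dichotomy theorem of \`Alvarez Montaner, Boix and Zarzuela: in (i.a) and (iii) the only new generator of $\mathcal{F}^e(E_R)$ beyond those already in $I^{[q]}$ is the class of $(\mathbf{x}^1)^{q-1}$, which arises as the $e$-fold composition of a single order-one operator, so $\mathcal{F}(E_R)$ is principally generated; in (i.b) and (ii) the piece $J_q$ contributes minimal generators not coming from lower orders for infinitely many $e$, forcing infinite generation. The principal obstacle throughout is the case~(ii) bookkeeping---specifically, leveraging minimality of the primary decomposition to pin down $\gamma_{k_i} = q-1$ exactly and then deducing the containment $I^{[q]} \subseteq J_q + (\mathbf{x}^1)^{q-1}$, so that no separate $I^{[q]}$ summand is needed in the final expression.
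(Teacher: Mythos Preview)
The paper does not include its own proof of this proposition: it is quoted as a result of \`Alvarez Montaner, Boix and Zarzuela from~\cite{Boix} and is stated without argument. There is therefore no proof in the present paper to compare your proposal against.

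On its own merits, your case analysis via the lcm description of the minimal generators of $(I^{[q]}:_S I)$ is the natural line and is essentially correct. Two small points. First, in cases~(i.b) and~(ii) you should explicitly exhibit a generator with the full mixed $\{0,q-1,q\}$ exponent pattern to justify the non-redundancy (``$\subsetneq$'') clauses; in case~(ii) this uses a height-$\geq 2$ prime together with a variable lying outside its support. Second, your closing sentence asserting that ``$J_q$ contributes minimal generators not coming from lower orders for infinitely many $e$'' is stronger than what you need and is exactly the content of the main theorem in Section~3 of this paper. If you are invoking the dichotomy of Theorem~1.2 as a black box, then all you require for infinite generation in cases~(i.b) and~(ii) is that $\mathcal{F}^1(E_R)$ is not cyclic over $R$, which follows immediately from $J_p \not\subseteq I^{[p]} + ((\mathbf{x}^1)^{p-1})$; the stronger ``infinitely many $e$'' claim should be dropped or deferred.
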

\vskip 0.5cm
\begin{Rem}
\begin{itemize}
\item[{(i)}] In the case when $\mathcal{F}(E_R)$ is principally generated, it is generated by $(\textbf{x}^1)^{p-1}$. 
When $\mathcal{F}(E_R)$ is infinitely generated, $\mathcal{F}^1(E_R)$ has $\mu +1$ minimal generators, $\mu$
of them being the minimal generators of $J_p$ and $(\textbf{x}^1)^{p-1}.$
Each graded piece $\mathcal{F}^e(E_R)$ adds up $\mu$ new generators coming from $J_q$.

\item[{(ii)}] The complexity sequence $\{c_e\}_{e\geq 2}$ is bounded by above by the minimal number of generators of the ideal $J_p$, i.e. $c_e\leq \mu_S(J_p),$ for any $e\geq 2.$ Note that $c_1=\mu +1$ and $c_0=1.$
\end{itemize}
\end{Rem}

\begin{Def}
Let $Supp(J_q)$ be the set of all the supports of the minimal monomial generators of $J_q.$
We define $\Gamma : = Supp(J_q)$ to be the \textit{support set} of the ring $R$. 
Then $(\Gamma, \subseteq)$ is a partially ordered set.
\end{Def}
\begin{Def}
Let $\Gamma$ be the support set of a Stanley-Reisner ring $R=S/I.$ Let $Min(\Gamma)$ be the set of elements in $\Gamma$ which are minimal with respect to inclusion.
We call $\Gamma$ \textit{minimal} if $\Gamma\neq\emptyset$ and $Min(\Gamma)=\Gamma$.

\end{Def}
\begin{Exa}
Let $I=(x_1x_2, x_1x_3, x_2x_4)$.
Then

\begin{center}
	$J_q = ( x_1^{q}x_2^{q-1}x_3^{q-1},  x_1^{q-1}x_2^{q}x_4^{q-1}).$
\end{center}
The support set is 
$$\Gamma=\{(1,2,3), (1,2,4) \}.$$
In this case, $\Gamma$ is minimal.
\end{Exa}

\begin{Exa}

The support set of the ideal in Example~\ref{ex} is 
$$\Gamma=\{(1,2,5), (2,3,4,5), (1,2,4,5), (1,2,3,5) \}.$$
In this case, $\Gamma$ is not minimal.
\end{Exa}

\section{Main Result}
In this section, we will prove that the complexity sequence $\{c_e\}_{e\geq 0}$ of the Frobenius algebra of operators of the injective hull of the residue field of any Stanley-Reisner ring with non-empty support set stabilizes starting with $e=2$. 

Let $k$ be a field of characteristic $p$, $S=k[[x_1,\ldots,x_n]]$ and $q=p^e$, for $e\geq 0$.
Let $I\leq S$ be a square-free monomial ideal in $S$ and $R=S/I$ the Stanley-Reisner ring associated to $I$. We will assume that the simplicial complex associated to the ring $R$ has no isolated vertices and use the notations introduced in the previous section.

\begin{Lma}\label{claim}
Let $e\geq 0$ an integer and suppose that $J_q\neq 0.$ Let $x^{\gamma_e}$ be a minimal monomial generator of $J_q.$ If there exists a minimal monomial generator $x^{\gamma'_e}$ of $J_q$ with $supp(\gamma'_e)\subsetneq supp(\gamma_e)$, then there exists at least one variable $x_k$ such that
\begin{center}
$deg_{x_k}(x^{\gamma_e})=q-1$ and $deg_{x_k}(x^{\gamma'_e})=q.$
\end{center}
\end{Lma}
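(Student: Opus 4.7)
The plan is to argue by contradiction. Assume that for every index $k\in\{1,\ldots,n\}$ the pair $(\deg_{x_k}(x^{\gamma_e}), \deg_{x_k}(x^{\gamma'_e}))$ is never equal to $(q-1,q)$. I will show that this assumption forces $x^{\gamma'_e}$ to properly divide $x^{\gamma_e}$, which contradicts the minimality of $x^{\gamma_e}$ as a monomial generator of $J_q$; the two monomials are certainly distinct, since their supports differ.

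The crucial input is Remark~\ref{lcm}(i): every exponent appearing in a minimal monomial generator of $(I^{[q]}:_S I)$ lies in $\{0, q-1, q\}$. This restriction is inherited by the minimal monomial generators of $J_q$, because $J_q$ is obtained from $(I^{[q]}:_S I)$ simply by discarding those minimal generators that already belong to $I^{[q]}$ or to $(\textbf{x}^1)^{q-1}$. Consequently, for each $k$ the coordinate pair $(\deg_{x_k}(x^{\gamma_e}), \deg_{x_k}(x^{\gamma'_e}))$ is one of at most nine candidates in $\{0,q-1,q\}^2$, and a short case split on $k$ settles divisibility. For $k\notin supp(\gamma_e)$ both exponents vanish. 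For $k\in supp(\gamma_e)\setminus supp(\gamma'_e)$ the right-hand exponent is $0$. For $k\in supp(\gamma'_e)$ both exponents lie in $\{q-1,q\}$, and among the four resulting pairs the only one obstructing the inequality $\deg_{x_k}(x^{\gamma'_e}) \leq \deg_{x_k}(x^{\gamma_e})$ is precisely $(q-1,q)$, which is excluded by the contradiction hypothesis. Hence $x^{\gamma'_e}\mid x^{\gamma_e}$, and the strict inclusion $supp(\gamma'_e)\subsetneq supp(\gamma_e)$ makes the division proper, yielding the desired contradiction.

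I do not anticipate any substantial obstacle: the statement is essentially a direct unpacking of the three-valued exponent structure guaranteed by Remark~\ref{lcm}(i). The only point worth a brief check is that a minimal generator of $J_q$ is in fact a minimal generator of $(I^{[q]}:_S I)$, so that the $\{0,q-1,q\}$ restriction applies, and this follows immediately from the definition of $J_q$.
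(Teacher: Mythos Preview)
Your proposal is correct and follows essentially the same route as the paper: both argue by contradiction, invoke Remark~\ref{lcm}(i) to restrict exponents to $\{0,q-1,q\}$, and deduce from the negated hypothesis together with $supp(\gamma'_e)\subsetneq supp(\gamma_e)$ that $x^{\gamma'_e}\mid x^{\gamma_e}$, contradicting minimality. Your case analysis is slightly more explicit than the paper's (which focuses only on the indices where $\deg_{x_k}(x^{\gamma'_e})=q$ and leaves the remaining cases implicit), but the argument is the same.
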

\begin{proof}
We will prove the lemma by contradiction. 
Assume not.
Then, for all the variables $x_k$ with $deg_{x_k}(x^{\gamma'_e})=q,$ we have that $deg_{x_k}(x^{\gamma_e})\neq q-1$, therefore $deg_{x_k}(x^{\gamma_e})\in\{0,q\},$ by Remark~\ref{lcm}.

But since $supp(\gamma'_e)\subsetneq supp(\gamma_e)$, we have that $deg_{x_k}(x^{\gamma_e}) > 0.$ 
Hence, $deg_{x_k}(x^{\gamma_e})= q.$

Then, we have that $x^{\gamma'_e}$ divides $x^{\gamma_e}$, which is a contradiction.
\end{proof}
\begin{Def}
Let $e\geq 0$ an integer and suppose that $J_p\neq 0.$
Let $x^{\gamma}\in J_p$ a minimal monomial generator. Using Remark~\ref{lcm}, we have a bijective correspondence between the minimal monomial generators of $J_p$ and the minimal monomial generators of $J_q$. Under this map, there exists $x^{\gamma_e}\in J_q$ which corresponds to $x^{\gamma}\in J_p.$ We define  $M_e(\gamma)\subseteq K_e$ to be the ideal generated by the minimal monomial generators $x^{\delta}\in J_q$ with $supp(x^{\delta})\subseteq supp(x^{\gamma_e})=supp(x^{\gamma})$.
\end{Def}
\begin{Lma}\label{ge}
Let $e\geq 0$ an integer and suppose that $J_p\neq 0.$
Let $x^{\gamma}\in J_p$ a minimal monomial generator. 
Let $1\leq \beta_1,\ldots,\beta_s< e$ with $\beta_1+\ldots+\beta_s=e$. 

Then, 
\begin{center}
$x^{\gamma_e}\in K_{\beta_1} K_{\beta_2}^{[p^{\beta_1}]}\cdots K_{\beta_s}^{[p^{\beta_1+\cdots+\beta_{s-1}}]}$
if and only if $x^{\gamma_e}\in M_{\beta_1}(\gamma) (M_{\beta_2}(\gamma))^{[p^{\beta_1}]}\cdots (M_{\beta_s}(\gamma))^{[p^{\beta_1+\cdots+\beta_{s-1}}]}.$
\end{center}
\end{Lma}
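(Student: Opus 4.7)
The backward direction is immediate from the inclusion $M_{\beta_i}(\gamma) \subseteq J_{p^{\beta_i}} \subseteq K_{\beta_i}$ for every $i$, so any element lying in the product of $M$-ideals automatically lies in the product of $K$-ideals.

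For the forward direction, write $b_i := \beta_1 + \cdots + \beta_i$. Since every ideal in sight is a monomial ideal in $S$, the membership $x^{\gamma_e}\in K_{\beta_1} K_{\beta_2}^{[p^{\beta_1}]}\cdots K_{\beta_s}^{[p^{b_{s-1}}]}$ is equivalent to the existence of minimal monomial generators $x^{\mu_i}\in K_{\beta_i}$ whose scaled product $x^{\mu_1}(x^{\mu_2})^{[p^{\beta_1}]}\cdots (x^{\mu_s})^{[p^{b_{s-1}}]}$ divides $x^{\gamma_e}$; equivalently, $\sum_{i=1}^{s} p^{b_{i-1}}\mu_{i,k}\leq \gamma_{e,k}$ for every $k$. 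Since $\gamma_{e,k}=0$ whenever $k\notin\mathrm{supp}(\gamma)$, this forces $\mu_{i,k}=0$ for all $i$ in that range, hence $\mathrm{supp}(\mu_i)\subseteq\mathrm{supp}(\gamma)$ for every $i$.

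Now I would classify each $x^{\mu_i}$ via the decomposition $K_{\beta_i}=I^{[p^{\beta_i}]}+J_{p^{\beta_i}}+(\mathbf{x}^1)^{p^{\beta_i}-1}$, which expresses each minimal generator of $K_{\beta_i}$ as belonging to one of these three summands. The generator $(\mathbf{x}^1)^{p^{\beta_i}-1}$ has full support $\{1,\ldots,n\}$, which is excluded by the support bound, because the assumption $J_p\neq 0$ combined with Lemma \ref{q} yields $\mathrm{supp}(\gamma)\subsetneq\{1,\ldots,n\}$. If $x^{\mu_i}$ is a minimal generator of $J_{p^{\beta_i}}$, then the support bound places it directly in $M_{\beta_i}(\gamma)$ by the very definition of $M_{\beta_i}(\gamma)$, and we are finished for that factor.

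The main obstacle is the remaining case $x^{\mu_i}=f^{p^{\beta_i}}$, where $f=x^\sigma$ is a squarefree minimal generator of $I$ with $\mathrm{supp}(\sigma)\subseteq\mathrm{supp}(\gamma)$. The extreme case $i=s$ is handled cleanly: the factor $(x^{\mu_s})^{[p^{b_{s-1}}]}=f^q$ must divide $x^{\gamma_e}$, which forces $\gamma_{e,k}=q$ (equivalently $\gamma_k=p$) for every $k\in\mathrm{supp}(\sigma)$, and therefore $f^{p}\mid x^{\gamma}$, placing $x^\gamma\in I^{[p]}$ in contradiction with $x^\gamma$ being a minimal generator of $J_p$. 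For $i<s$, the technical heart of the proof is an inductive replacement argument on the number of indices $i$ with $x^{\mu_i}\in I^{[p^{\beta_i}]}$: one swaps such an $x^{\mu_i}$ for a minimal generator of $J_{p^{\beta_i}}$ whose support equals $\mathrm{supp}(\gamma)$ (for instance $x^{\gamma_{\beta_i}}$, the image of $\gamma$ under the bijection of Remark \ref{lcm}), then uses the defining relation $x^\gamma\cdot I\subseteq I^{[p]}$ coming from $x^\gamma\in K_1$, applied coordinate by coordinate against the pattern $\gamma_{e,k}\in\{0,q-1,q\}$, to verify that the modified product still divides $x^{\gamma_e}$. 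Once every $I^{[p^{\beta_i}]}$-factor has been eliminated, every $x^{\mu_i}$ lies in $M_{\beta_i}(\gamma)$, which completes the forward direction.
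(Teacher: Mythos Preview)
Your approach diverges substantially from the paper's. The paper's forward argument is very short: after writing $x^{\gamma_e}=m_{\beta_1}m_{\beta_2}^{p^{\beta_1}}\cdots m_{\beta_s}^{p^{b_{s-1}}}\cdot m$ with $m_{\beta_i}\in K_{\beta_i}$, it observes the support constraint $\mathrm{supp}(m_{\beta_i})\subseteq\mathrm{supp}(\gamma_e)$ and concludes directly that $x^{\gamma_e}$ lies in the $M$-product, with no case analysis. You go further and decompose each $K_{\beta_i}$ as $I^{[p^{\beta_i}]}+J_{p^{\beta_i}}+(\mathbf{x}^{1})^{p^{\beta_i}-1}$, treating the three summands separately. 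This extra care is not unreasonable: since $M_{\beta_i}(\gamma)$ is by definition generated only by $J_{p^{\beta_i}}$-generators, a factor $m_{\beta_i}$ coming from $I^{[p^{\beta_i}]}$ does not obviously lie in $M_{\beta_i}(\gamma)$, and the paper's one-line conclusion is arguably too brief on this point.

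That said, your handling of the $I^{[p^{\beta_i}]}$ case for $i<s$ has a gap. The ``inductive replacement'' you sketch---swapping $x^{\mu_i}=f^{p^{\beta_i}}$ for $x^{\gamma_{\beta_i}}$---can \emph{increase} the exponent at coordinates $k\in\mathrm{supp}(\gamma)\setminus\mathrm{supp}(f)$ (from $0$ up to $p^{\beta_i}$ or $p^{\beta_i}-1$), and you do not verify that the modified product still divides $x^{\gamma_e}$ at such coordinates; invoking ``the defining relation $x^{\gamma}\cdot I\subseteq I^{[p]}$'' does not by itself give the required coordinatewise inequality. A cleaner way to dispose of this case, uniformly in $i$ and subsuming your $i=s$ argument, is to use the colon property directly: if some $m_{\beta_i}\in I^{[p^{\beta_i}]}$ then $m_{\beta_i}^{p^{b_{i-1}}}\in I^{[p^{b_i}]}$, and since $K_{\beta_j}^{[p^{b_{j-1}}]}\cdot I^{[p^{b_{j-1}}]}\subseteq I^{[p^{b_j}]}$ for each $j$, multiplying by $m_{\beta_{i+1}}^{p^{b_i}},\ldots,m_{\beta_s}^{p^{b_{s-1}}}$ telescopes to $\prod_j m_{\beta_j}^{p^{b_{j-1}}}\in I^{[q]}$, forcing $x^{\gamma_e}\in I^{[q]}$ and contradicting the minimality of $J_q$.
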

\begin{proof}
Let $x^{\gamma_e}\in M_{\beta_1}(\gamma) (M_{\beta_2}(\gamma))^{[p^{\beta_1}]}\cdots (M_{\beta_s}(\gamma))^{[p^{\beta_1+\cdots+\beta_{s-1}}]}.$

Since  $M_{\beta_i}(\gamma)\subseteq K_{\beta_i}$, we obtain that $x^{\gamma_e}\in K_{\beta_1} K_{\beta_2}^{[p^{\beta_1}]}\cdots K_{\beta_s}^{[p^{\beta_1+\cdots+\beta_{s-1}}]}.$

Now, let $x^{\gamma_e}\in K_{\beta_1} K_{\beta_2}^{[p^{\beta_1}]}\cdots K_{\beta_s}^{[p^{\beta_1+\cdots+\beta_{s-1}}]}.$ Then for every $i\in\{1,\ldots,s\},$ there exists $m_{\beta_i}\in K_{\beta_i}$ such that $x^{\gamma_e}= m_{\beta_1} m_{\beta_2}^{p^{\beta_1}}\cdots m_{\beta_s}^{p^{\beta_1+\cdots+\beta_{s-1}}}\cdot m,$ for some $m\in S.$ 

If there exists at least an $i$ with $supp(m_{\beta_i})\not\subseteq supp(x^{\gamma_e}),$ there exists at least one $x_k\in supp(m_{\beta_i})\setminus supp(x^{\gamma_e}).$ But this contradicts the equality $x^{\gamma_e}= m_{\beta_1} m_{\beta_2}^{p^{\beta_1}}\cdots m_{\beta_s}^{p^{\beta_1+\cdots+\beta_{s-1}}}\cdot m.$
Therefore, we must have that $supp(m_{\beta_i})\subseteq supp(x^{\gamma_e}),$ for all $i\in\{1,\ldots,s\}.$ 

Hence, $x^{\gamma_e}\in M_{\beta_1}(\gamma) (M_{\beta_2}(\gamma))^{[p^{\beta_1}]}\cdots (M_{\beta_s}(\gamma))^{[p^{\beta_1+\cdots+\beta_{s-1}}]}.$
\end{proof}
\begin{Prop}\label{stab}
Let $e\geq 2$ an integer and suppose that $J_q\neq 0.$ If all the minimal monomial generators of $J_q$ are not contained in $L_e$, then $c_e = c_{e+1}$, for all $e\geq 2$.
\end{Prop}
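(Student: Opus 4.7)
The plan is to show that the hypothesis forces both $c_e$ and $c_{e+1}$ to attain the upper bound $\mu_S(J_p)$ from Remark~2.11(ii). Since this bound is shared, it suffices to verify equality at both levels.

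First, I would observe that $(\mathbf{x}^1)^{q-1} \in L_e$ for every $e \geq 2$: the product $\prod_{i=0}^{e-1} ((\mathbf{x}^1)^{p-1})^{p^i} = (\mathbf{x}^1)^{p^e-1}$ lies in $K_1 K_1^{[p]} \cdots K_1^{[p^{e-1}]} \subseteq L_e$. Hence $(I^{[q]}:_S I) = J_q + L_e + I^{[q]}$. By the minimality of $J_q$, its minimal monomial generators lie outside $I^{[q]}$; by hypothesis they lie outside $L_e$ as well; since $L_e + I^{[q]}$ is a monomial ideal, none lies in $L_e + I^{[q]}$. Combined with the bijection of Remark~3.5 (between minimal generators of $J_p$ and of $J_q$), this gives $c_e = \mu_S(J_p)$.

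The main step is showing $c_{e+1} = \mu_S(J_p)$, which by the same reasoning reduces to showing no minimal monomial generator $x^{\gamma_{e+1}}$ of $J_{q'}$ lies in $L_{e+1}$. Suppose for contradiction $x^{\gamma_{e+1}} \in L_{e+1}$. By Lemma~3.2, there exist $1 \leq \beta_i \leq e$ with $\beta_1 + \cdots + \beta_s = e+1$ and minimal generators $x^{\delta^{(i)}} \in M_{\beta_i}(\gamma)$ such that
\[
x^{\gamma_{e+1}} = x^{\delta^{(1)}}(x^{\delta^{(2)}})^{p^{\beta_1}} \cdots (x^{\delta^{(s)}})^{p^{\beta_1+\cdots+\beta_{s-1}}} \cdot x^{\mu}
\]
for some $x^\mu \in S$. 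I would derive $x^{\gamma_e} \in L_e$ (contradicting the hypothesis) by constructing a witness decomposition of $e$ in three cases, using throughout the bijection of Remark~3.5, which sends $x^{\delta^{(i)}}$ to its analog in $J_{p^{\beta_i - 1}}$ with degree $\lfloor \delta^{(i)}_k/p \rfloor$ in each variable $x_k$. In \textbf{Case 1} ($\beta_1 \geq 2$), use $(\beta_1-1, \beta_2, \ldots, \beta_s)$ with $x^{\delta^{(1)}}$ replaced by its $M_{\beta_1-1}(\gamma)$-analog; dividing the level-$(e+1)$ inequality by $p$ and exploiting the integrality of the left-hand side against $\gamma_{e+1,k}/p \leq \gamma_{e,k} + (p-1)/p$ yields the required divisibility. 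In \textbf{Case 2} ($\beta_1 = 1$, $s \geq 3$), drop $x^{\delta^{(1)}}$ and use $(\beta_2, \ldots, \beta_s)$; from $\gamma_{e+1,k} = \delta^{(1)}_k + pY_k + \mu_k$, with $Y_k$ the degree of the new product, the same integrality bound gives $Y_k \leq \gamma_{e,k}$. In \textbf{Case 3} ($\beta_1 = 1$, $s = 2$, forcing $\beta_2 = e$), use $(1, e-1)$ with $x^{\delta^{(2)}}$ replaced by its $M_{e-1}(\gamma)$-analog; a finite case analysis on the admissible triples $(\delta^{(1)}_k, \delta^{(2)}_k, \gamma_{e+1,k}) \in \{0,p-1,p\} \times \{0, p^e - 1, p^e\} \times \{0, p^{e+1} - 1, p^{e+1}\}$, constrained by the level-$(e+1)$ inequality $\delta^{(1)}_k + p \delta^{(2)}_k \leq \gamma_{e+1,k}$, verifies $\delta^{(1)}_k + p \lfloor \delta^{(2)}_k/p \rfloor \leq \gamma_{e,k}$ in each admissible combination. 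In every case one also checks that each new $\beta_i$ remains in $[1, e-1]$, so the constructed decomposition is a legitimate witness for $L_e$.

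The hardest step will be Case~3. In Cases~1--2 the ``divide by $p$'' trick shrinks the left-hand side uniformly, but in Case~3 the savings from replacing $\delta^{(2)}_k$ with $\lfloor \delta^{(2)}_k/p \rfloor$ is only $p^e(p-1)$, and only when $\delta^{(2)}_k \neq 0$. Fortunately the level-$(e+1)$ inequality constrains the admissible triples tightly enough that a short case check closes the argument (for instance, $\delta^{(2)}_k = p^e$ forces $\delta^{(1)}_k = 0$ and $\gamma_{e+1,k} = p^{e+1}$, while $\delta^{(2)}_k = p^e - 1$ forces $\delta^{(1)}_k \leq p$).
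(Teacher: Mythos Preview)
Your first paragraph is exactly the paper's entire proof. The proposition is meant to be read with the hypothesis ranging over \emph{every} $e\ge 2$ simultaneously (the awkward placement of ``for all $e\ge 2$'' notwithstanding): assuming that for each $e\ge 2$ no minimal generator of $J_{p^e}$ lies in $L_e$, conclude $c_e=c_{e+1}$ for all such $e$. Under that reading the argument is a one-liner: the hypothesis at level $e$ gives $c_e=\mu_S(J_q)$, and since $\mu_S(J_q)=\mu_S(J_p)$ is independent of $e$ by Remark~2.7(ii), the sequence is constant. The paper then establishes the hypothesis for all $e$ in the theorem immediately following, by a direct (non-inductive) contradiction argument using Lemma~\ref{claim} on the top two factors of a putative $L_e$-witness.

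You instead read the hypothesis as holding for a single fixed $e$ and set out to propagate it to level $e+1$. Your three-case descent---shrinking a witness decomposition of $e+1$ to one of $e$ by applying the floor map $\lfloor\,\cdot/p\rfloor$ to the appropriate factor---is correct (I checked the arithmetic in all sub-cases, including the delicate Case~3 table). What you have actually produced is the inductive step ``hypothesis at $e$ $\Rightarrow$ hypothesis at $e+1$'' for $e\ge 2$; together with a separately verified base case at $e=2$, this would give a genuinely different proof of the paper's main theorem, by induction rather than by the paper's direct analysis. So your Cases~1--3 are unnecessary for this proposition as the paper intends it, but they are not wasted: they constitute an alternative route to the theorem that follows. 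Note, however, that your induction does \emph{not} bootstrap from $e=1$ (Case~3 needs $e-1\ge 1$), so the base case $e=2$ would still require a direct argument.
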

\begin{proof}
Let $e\geq 2$ and let $x^{\gamma_e}$ a minimal monomial generator of $J_q.$ We know that $x^{\gamma_e}$ is not contained in $L_e$. So, we obtain that $\bar{0}\neq\overline{x^{\gamma_e}}\in \left(\displaystyle\frac{I^{[q]}:_S I}{L_e+I^{[q]}}\right)$.
Since $x^{\gamma_e}$ was arbitrarly chosen in $J_q$ and $c_e=\mu_S\left(\displaystyle\frac{I^{[q]}:_S I}{L_e+I^{[q]}}\right)$, we have that $c_e= \mu_S(J_q),$ for all $e\geq 2$. Therefore, $c_e = c_{e+1}$, for all $e\geq 2.$
\end{proof}
\begin{Rem}
In order to show that the complexity sequence $\{c_e\}_{e\geq 0}$ stabilizes starting with $e=2$, it is enough to show that all the minimal monomial generators of $J_q$ are not contained in $L_e.$
\end{Rem}

\begin{Thm}
Let $e\geq 0$ an integer and suppose that $J_q\neq 0.$ Let $x^{\gamma_e}$ be a minimal monomial generator of $J_q.$ Then, $x^{\gamma_e}$ is not contained in $L_e$. 
\end{Thm}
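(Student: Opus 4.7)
The plan is to argue by contradiction: assume $x^{\gamma_e}$ is a minimal monomial generator of $J_q$ that nonetheless lies in $L_e$. Since $L_e$ is a sum of monomial ideals, $x^{\gamma_e}$ must belong to a single summand $K_{\beta_1}\,K_{\beta_2}^{[p^{\beta_1}]}\cdots K_{\beta_s}^{[p^{\sigma_{s-1}}]}$, writing $\sigma_i:=\beta_1+\cdots+\beta_i$, with $s\ge 2$ (because every $\beta_i<e$). Lemma~\ref{ge} upgrades this to $x^{\gamma_e}\in M_{\beta_1}(\gamma)\,M_{\beta_2}(\gamma)^{[p^{\beta_1}]}\cdots M_{\beta_s}(\gamma)^{[p^{\sigma_{s-1}}]}$, and since the latter is a product of monomial ideals containing the monomial $x^{\gamma_e}$, there exist minimal monomial generators $m_i$ of $M_{\beta_i}(\gamma)$ and a monomial $m\in S$ with
\[
x^{\gamma_e}=m_1\,m_2^{p^{\beta_1}}\cdots m_s^{p^{\sigma_{s-1}}}\cdot m.
\]
Each $m_i$ is a minimal monomial generator of $J_{p^{\beta_i}}$ supported inside $supp(\gamma)$; revisiting the case analysis inside the proof of Lemma~\ref{q} under the standing hypothesis $I_{\alpha_1}+\cdots+I_{\alpha_r}=(x_1,\ldots,x_n)$ shows that each $m_i$ must carry at least one exponent equal to $p^{\beta_i}$ and at least one equal to $p^{\beta_i}-1$.

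The central step is to identify the outermost factor $m_s$. Write $T=\{k:\gamma_{e,k}=q-1\}$ and $U=\{k:\gamma_{e,k}=q\}$. If some $k\in T$ had $\deg_{x_k}(m_s)=p^{\beta_s}$, its contribution $p^{\sigma_{s-1}}\cdot p^{\beta_s}=p^e$ to the exponent sum at $x_k$ would already exceed $\gamma_{e,k}=p^e-1$; hence $\deg_{x_k}(m_s)\in\{0,p^{\beta_s}-1\}$ for every $k\in T$. Transporting $m_s$ back to $J_p$ via the bijection of Remark~\ref{lcm}(ii) yields a minimal generator $x^{\gamma^{(s)}}\in J_p$ whose $p$-positions sit inside $U$ and whose $(p-1)$-positions sit inside $T\cup U=supp(\gamma)$; thus $\gamma^{(s)}_k\le\gamma_k$ componentwise. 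Two distinct minimal monomial generators of a monomial ideal cannot divide one another, so $\gamma^{(s)}=\gamma$, and $m_s$ must coincide with the canonical lift $x^{\gamma_{\beta_s}}$.

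With $m_s$ pinned down, the contradiction is short. For $k\in U$ the factor $m_s^{p^{\sigma_{s-1}}}$ already contributes the full $p^e=\gamma_{e,k}$, forcing every other factor to vanish at $x_k$, and therefore $supp(m_i)\subseteq T$ for all $i<s$. Applying the same "top-exponent is always used" observation to $m_{s-1}$, there is some $k^*\in supp(m_{s-1})\subseteq T$ with $\deg_{x_{k^*}}(m_{s-1})=p^{\beta_{s-1}}$. At this $k^*$, the factors $m_s^{p^{\sigma_{s-1}}}$ and $m_{s-1}^{p^{\sigma_{s-2}}}$ contribute $p^{\sigma_{s-1}}(p^{\beta_s}-1)=p^e-p^{\sigma_{s-1}}$ and $p^{\sigma_{s-2}}\cdot p^{\beta_{s-1}}=p^{\sigma_{s-1}}$ respectively, summing already to $p^e>p^e-1=\gamma_{e,k^*}$. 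This contradicts the factorization of $x^{\gamma_e}$.

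The main obstacle is the middle paragraph: the identification $m_s=x^{\gamma_{\beta_s}}$ is what makes the final arithmetic exact, and it hinges on the sharpening of Lemma~\ref{q} noted above, namely that \emph{every} minimal generator of $J_{p^{\beta}}$ (not merely some one of them) uses all three exponent values $0$, $p^{\beta}-1$, $p^{\beta}$. The cases $e=0,1$ are trivial, since then $L_e=0$.
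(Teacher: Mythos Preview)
Your proof is correct and follows the same overall architecture as the paper's: assume $x^{\gamma_e}\in L_e$, pass to the ideals $M_{\beta_i}(\gamma)$ via Lemma~\ref{ge}, pin down the outermost factor $m_s$ as the canonical lift $x^{\gamma_{\beta_s}}$, then extract a degree contradiction at a single variable coming from $m_{s-1}$. The final arithmetic
\[
(p^{\beta_s}-1)\,p^{\sigma_{s-1}}+p^{\beta_{s-1}}\,p^{\sigma_{s-2}}=p^e>p^e-1
\]
is identical in both arguments.

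The one genuine difference is in which auxiliary lemma drives the two decisive moments. The paper invokes Lemma~\ref{claim} twice: first to rule out each $m_s=x^{\delta^{(j)}_{\beta_s}}$ with $supp(\delta^{(j)})\subsetneq supp(\gamma)$, and again to locate the offending variable $x_t$ for $m_{s-1}$. You bypass Lemma~\ref{claim} entirely. For the identification of $m_s$ you observe directly that the $J_p$-avatar of $m_s$ divides $x^{\gamma}$ componentwise, hence equals it by minimality of generators; this is slightly slicker and, incidentally, does not depend on the paper's tacit assumption that $x^{\gamma}$ is the \emph{only} minimal generator of $J_p$ with support exactly $supp(\gamma)$. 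For the contradiction from $m_{s-1}$ you instead use the strengthened reading of Lemma~\ref{q} (that \emph{every} minimal generator of $J_{p^{\beta}}$, not just some one, carries all three exponents $0,\,p^{\beta}-1,\,p^{\beta}$), which is indeed what the case analysis in that lemma's proof establishes. The payoff is a more self-contained argument that never needs the nested-support comparison of Lemma~\ref{claim}; the cost is that you must articulate and justify that sharpening explicitly.
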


\begin{proof}

Let $x^{\delta_e^{(i)}}\in J_q$ be a minimal monomial generator with 
\begin{center}
    $supp(\delta_e^{(1)}),\ldots, supp(\delta_e^{(k)})\subsetneq supp(\gamma_e),$

\end{center}
where $k\geq 0.$ Note that if $k=0,$ all the minimal monomial generators of $x^{\gamma_e}$ have minimal support. Note that
\begin{center}
    $M_e(\gamma):=(x^{\gamma_e},x^{\delta_e^{(j)}}: j=1,\ldots,k).$
\end{center}

We want to show that $x^{\gamma_e}\notin L_e.$

If $x^{\gamma_e}\in L_e$, there exists
$1\leq \beta_1,\ldots,\beta_s< e$ with $\beta_1+\ldots+\beta_s=e$ with $x^{\gamma_e}\in K_{\beta_1} K_{\beta_2}^{[p^{\beta_1}]}\cdots K_{\beta_s}^{[p^{\beta_1+\cdots+\beta_{s-1}}]}.$

Using Lemma~\ref{ge}, we have that $x^{\gamma_e}\in M_{\beta_1}(\gamma) (M_{\beta_2}(\gamma))^{[p^{\beta_1}]}\cdots (M_{\beta_s}(\gamma))^{[p^{\beta_1+\cdots+\beta_{s-1}}]},$ where

$M_{\beta_i}(\gamma):=(x^{\gamma_{\beta_i}},x^{\delta_{\beta_i}^{(j)}}: j=1,\ldots,k),$ for all $i\in\{1,\ldots,s\}.$ 

Then there exists $m_{\beta_i}\in M_{\beta_i}(\gamma)$
such that $m_{\beta_1}m_{\beta_2}^{p^{\beta_1}}\cdots m_{\beta_s}^{p^{\beta_1+\ldots +\beta_{s-1}}}$ divides $x^{\gamma_e}.$

By Lemma~\ref{claim}, we have the following:

For any $j\in\{1,\ldots,k\}$, there exists at least one variable $x_{o(j)}\in supp(\delta_e^{(j)})$ with 
\begin{center}
$deg_{x_{o(j)}}(x^{\gamma_e})=q-1$ and $deg_{x_{o(j)}}(x^{\delta_e^{(j)}})=q,$
\end{center}
for all $e\geq 0.$
If $m_{\beta_s}$ is a multiple of $x^{\delta_{\beta_s}^{(j)}}$, for some $j\in\{1,\ldots,k\},$ using the Lemma~\ref{claim} there exists 
$x_{o(j)}\in supp(\delta_e^{(j)})$ with 
\begin{center}
$deg_{x_o(j)}(x^{\gamma_{\beta_{s}}})=p^{\beta_s}-1$ and $deg_{x_o(j)}(x^{\delta_{\beta_{s}}^{(j)}})=p^{\beta_s}.$
\end{center}
Then, we obtain that 
\begin{center}
 $deg_{x_{o(j)}}(x^{\gamma_e})=q-1 \geq deg_{x_{o(j)}}(m_{\beta_s})\cdot p^{{\beta_1+\ldots +\beta_{s-1}}}\geq q,$   
\end{center}
which is a contradiction.

Therefore, we must have that $m_{\beta_s}$ is a multiple of $x^{\gamma_{\beta_s}}$.

Now for those variables $x_r$ with $deg_{x_{r}}(x^{\gamma_e})=q$, we have that 
\begin{center}
    $deg_{x_{r}}(x^{\gamma_e})=q \geq deg_{x_{r}}(m_{\beta_s})\cdot p^{{\beta_1+\ldots +\beta_{s-1}}} \geq q,$
\end{center}
so we must have equality.

That means that $deg_{x_{r}}(m_{\beta_1}m_{\beta_2}^{p^{\beta_1}}\cdots m_{\beta_{s-1}}^{p^{\beta_1+\ldots +\beta_{s-2}}})=0$, which implies that $m_{\beta_j}$ is not a multiple of $x^{\gamma_{\beta_j}},$ for all $j\in\{1,\ldots,s-1\}.$

In particular, we have that $m_{\beta_{s-1}}$ is a multiple of $x^{\delta_{\beta_{s-1}}^{(j)}},$ for some $j\in\{1,\ldots,k\}.$

Using the Lemma~\ref{claim} again, we know that there exists a variable $x_t\in supp(\delta_e^{(j)})$ with
\begin{center}
$deg_{x_t}(x^{\gamma_{\beta_{s-1}}})=p^{\beta_{s-1}}-1$ and $deg_{x_t}(x^{\delta_{\beta_{s-1}}^{(j)}})=p^{\beta_{s-1}}.$
\end{center}
Hence
\begin{center}
    $deg_{x_{t}}(x^{\gamma_e})=q-1 \geq deg_{x_{t}}(m_{\beta_{s-1}})\cdot p^{{\beta_1+\ldots +\beta_{s-2}}} + deg_{x_{t}}(m_{\beta_s})\cdot p^{{\beta_1+\ldots +\beta_{s-1}}}$
    \end{center}
    \begin{center}
   $ \geq p^{\beta_{s-1}}\cdot p^{{\beta_1+\ldots +\beta_{s-2}}} + (p^{\beta_{s}}-1)\cdot p^{\beta_1+\ldots +\beta_{s-1}}=q ,$

\end{center}
which gives us a contradiction.

We proved that $x^{\gamma_e}\notin L_e.$

\end{proof}
\begin{Cor}\label{stab}
Let $R$ be a Stanley-Reisner ring such that the simplicial complex associated to it has no isolated vertices. Then the complexity sequence of the Frobenius algebra of operators on the injective hull of the residue field of the ring $R$ is given by 
\begin{center}
	$\{c_e\}_{e\geq 0}=\{0,\mu +1,\mu,\mu,\mu,\ldots\},$ 
\end{center}
where $\mu : = \mu_S(J_p).$
\end{Cor}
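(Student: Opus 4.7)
The corollary is a bookkeeping consequence of the preceding Theorem, Proposition~\ref{stab}, and the earlier structural Remarks. The plan is to split into the two cases $J_p=0$ and $J_p\neq 0$, then handle the three ranges $e=0$, $e=1$, and $e\geq 2$ separately.

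First I would handle the principally generated cases, where $J_p=0$ and therefore $\mu=0$. Here $\mathcal{F}(E_R)$ is generated in degree $1$ by $(\textbf{x}^1)^{p-1}$, so $k_e=1$ for every $e\geq 1$, producing the sequence $\{0,1,0,0,\ldots\}$, which agrees with $\{0,\mu+1,\mu,\mu,\ldots\}$ at $\mu=0$. In the infinitely generated case, $J_p\neq 0$, and Remark~\ref{lcm}(ii) gives $\mu_S(J_q)=\mu_S(J_p)=\mu$ for every $e\geq 1$, since the minimal monomial generators of $J_q$ arise from those of $J_p$ by replacing $p$ with $q$. The value $c_0=0$ comes from the definition, and $c_1=\mu+1$ is precisely what the earlier Remark records for the minimal generators of $\mathcal{F}^1(E_R)$ in this case.

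For $e\geq 2$, the Theorem guarantees that every minimal monomial generator $x^{\gamma_e}$ of $J_q$ fails to lie in $L_e$, so Proposition~\ref{stab} delivers $c_e=c_{e+1}$ and, as extracted from its proof, $c_e=\mu_S(J_q)=\mu$. No serious obstacle remains, since the combinatorial content has already been absorbed by the Theorem. The only small check worth mentioning is that the identification $c_e=\mu_S(J_q)$ in Proposition~\ref{stab} implicitly uses $(\textbf{x}^1)^{q-1}\in L_e$ for $e\geq 2$; this follows from $(\textbf{x}^1)^{p-1}\in K_1$ together with the factorization $(\textbf{x}^1)^{q-1}=\prod_{i=0}^{e-1}\bigl[(\textbf{x}^1)^{p-1}\bigr]^{[p^i]}$, which ensures that $L_e+I^{[q]}$ absorbs both $I^{[q]}$ and $(\textbf{x}^1)^{q-1}$, leaving only the minimal generators of $J_q$ that escape $L_e$ as contributors to $\mu_S\bigl((I^{[q]}:_S I)/(L_e+I^{[q]})\bigr)$.
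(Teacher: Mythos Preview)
Your proposal is correct and follows the same approach as the paper, which in fact states the corollary without an explicit proof, leaving it as an immediate consequence of the preceding Theorem and Proposition~\ref{stab} together with the earlier Remarks on $c_0$ and $c_1$. You have simply filled in the details the paper omits---the case split on $J_p=0$, the identification $\mu_S(J_q)=\mu_S(J_p)$ via Remark~\ref{lcm}(ii), and the verification that $(\textbf{x}^1)^{q-1}\in L_e$ for $e\geq 2$---all of which are appropriate and add clarity.
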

\begin{Rem}
Corollary~\ref{stab} implies Theorem 4.9 in~\cite{BMZ}.
\end{Rem}

\subsection{General Case}
So far, we worked with Stanley-Reisner rings satisfying $I_{\alpha_1}+ I_{\alpha_2}+\cdots+ I_{\alpha_r}=(x_1,\ldots, x_n),$ and we showed that for these rings, the complexity sequence stabilizes starting with $e=2.$ 

Now our main goal will be to extend this result to all the Stanley-Reisner rings, by dropping the condition on the supports of the minimal prime ideals in the minimal primary decomposition of the ideal $I$.
\vskip 2mm

Let $c_{e,R}:= c_e(\mathcal{F}(E_R)).$
\begin{Thm}\label{flat}
Let $(S,m)\xrightarrow{}(T,n)$ be a flat, local extension of regular local rings and let $I\leq S$ be an ideal in $S.$ 
Let $R:=\displaystyle\frac{S}{I}$ and $R':=\displaystyle\frac{T}{IT}.$ Then, $c_{e,R}=c_{e,R'},$ for all $e\geq 0.$
\end{Thm}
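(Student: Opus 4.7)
The plan is to translate the statement, via the Fedder--Katzman description recalled earlier, into a base-change claim about the finitely generated $S$-module
\[
M_e := \frac{I^{[q]} :_S I}{L_e + I^{[q]}},
\]
for which we know $c_{e,R} = \mu_S(M_e)$. Denoting by $M_e'$ the analogous module built from $IT$ inside $T$, the whole theorem reduces to showing (a) $M_e' \cong M_e \otimes_S T$ and (b) $\mu_S(M_e) = \mu_T(M_e \otimes_S T)$.

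First I would record two standard flat-base-change identities. Because $S \to T$ is flat and $I^{[q]}$ is generated by the $q$-th powers of the generators of $I$, one has $(IT)^{[q]} = I^{[q]} T$. Moreover, since $I$ and $I^{[q]}$ are finitely generated and $T$ is $S$-flat, the colon commutes with extension of scalars:
\[
(I^{[q]} :_S I) \otimes_S T \;=\; (I^{[q]} T :_T IT) \;=\; ((IT)^{[q]} :_T IT).
\]
Writing $K_{e,R} = (I^{[p^e]} :_S I)$ and $K_{e,R'} = ((IT)^{[p^e]} :_T IT)$, this gives $K_{e,R'} = K_{e,R}\,T$. Sums, products and Frobenius powers of ideals are all preserved under extension along the flat map $S \to T$, so iterating these commutations inside Katzman's defining formula for $L_e$ yields $L_{e,R'} = L_{e,R}\cdot T$. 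Tensoring the short exact sequence $0 \to (L_e + I^{[q]}) \to (I^{[q]}:_S I) \to M_e \to 0$ with the flat $S$-module $T$ then produces the isomorphism $M_e \otimes_S T \cong M_e'$.

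The final step is the standard Nakayama-type observation that for any local homomorphism $(S,m,k) \to (T,n,l)$ of Noetherian local rings and any finitely generated $S$-module $M$, one has
\[
(M \otimes_S T)/n(M \otimes_S T) \;=\; M \otimes_S l \;=\; (M/mM) \otimes_k l,
\]
whose $l$-dimension equals $\dim_k(M/mM) = \mu_S(M)$. Applied to $M = M_e$, this gives $\mu_T(M_e') = \mu_S(M_e)$ and hence $c_{e,R'} = c_{e,R}$.

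The main technical point to verify carefully is the colon-ideal commutation displayed above; once that is in hand the rest is routine bookkeeping with flat base change. Neither step requires $I$ to be squarefree monomial or the simplicial complex to have any particular shape, which is what allows the theorem to hold in full generality and thereby reduce the Stanley--Reisner case to the situation treated in the previous subsection (by passing to a flat extension in which the sum of the minimal primes becomes the maximal ideal).
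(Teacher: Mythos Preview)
Your proof is correct and follows essentially the same route as the paper's: both reduce to showing that the module $M_e=(I^{[q]}:_SI)/(L_e+I^{[q]})$ base-changes to its analogue over $T$ via the flat colon-ideal identity $(I^{[q]}:_SI)T=(IT)^{[q]}:_T IT$, and then invoke the standard fact that $\mu_S(M)=\mu_T(M\otimes_S T)$ for a local homomorphism. Your write-up is slightly more streamlined in the final Nakayama step (the paper spells out the isomorphism $(T\otimes_S M)/n(T\otimes_S M)\cong L\otimes_S M\cong L\otimes_K(M/mM)$ via a Tor long exact sequence), but the content is the same.
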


\begin{proof}
We know that $c_{e,R}=\mu_S\left(\displaystyle\frac{I^{[q]}:_S I}{L_{e,R}+I^{[q]}}\right)$ and $c_{e,R'}=\mu_T\left(\displaystyle\frac{(IT)^{[q]}:_S (IT)}{L_{e,R'}+(IT)^{[q]}}\right),$ for all $e\geq 0.$  

Since $S\xrightarrow{}T$ is a flat extension of rings, we have that $(IT)^{[q]}:_S (IT)=(I^{[q]}:_S I)T,$  for all $e\geq 0.$ Hence, we obtain that $L_{e,R'}= L_eT.$
Moreover, $(IT)^{[q]}=I^{[q]}T,$ for all $e\geq 0.$ Therefore,$c_{e,R'}=\mu_T\left(\displaystyle\frac{(I^{[q]}:_S I)T}{(L_{e,R}+I^{[q]})T}\right),$ for all $e\geq 0.$ 

Let $A:= (I^{[q]}:_S I),$ $B:= (I^{[q]}+L_e)$ and $M:=\displaystyle\frac{A}{B}.$
Now in order to show that $c_{e,R}=c_{e,R'},$ for all $e\geq 0,$ it sufices to prove that $\mu_S(M)=\mu_T(T\otimes_S M).$

It is enought to show that $\mu_K\left(\displaystyle\frac{M}{mM}\right)=\mu_K\left(\displaystyle\frac{T\otimes_S M}{n(T\otimes_S M)}\right),$ where $K$ is the residue field $S/m.$

Let $K:=\displaystyle\frac{S}{m}$ and $L:=\displaystyle\frac{T}{n}$.
By tensoring the exact sequence 
\begin{center}
    $0\longrightarrow n\longrightarrow T\longrightarrow L\longrightarrow 0| \otimes_S M$
\end{center}
we obtain the following exact sequence 
\begin{center}
    $\ldots\longrightarrow Tor_1(L,M)\longrightarrow n\otimes_S M\longrightarrow T\otimes_S M\longrightarrow L\otimes_S M\longrightarrow 0.$
\end{center}
Hence, we have that $Im(n\otimes_S M\longrightarrow T\otimes_S M)=Ker(T\otimes_S M\longrightarrow L\otimes_S M).$

By the Fundamental Theorem of Isomorphism, 
\begin{center}
$\displaystyle\frac{T\otimes_S M}{Ker(T\otimes_S M\longrightarrow L\otimes_S M)}\cong Im(T\otimes_S M\longrightarrow L\otimes_S M).$

\end{center}

But the map $T\otimes_S M\longrightarrow L\otimes_S M$ is surjective, therefore 
$$Im(T\otimes_S M\longrightarrow L\otimes_S M)=L\otimes_S M.$$
Moreover, it is easy to see that $Im(n\otimes_S M\longrightarrow T\otimes_S M)= n(T\otimes_S M).$

Hence, we showed that 
\begin{center}
$\displaystyle\frac{T\otimes_S M}{n(T\otimes_S M)}\cong L\otimes_S M.$
\end{center}
In order to complete the proof, we will show that $\mu_L(L\otimes_S M)=\mu_K\left(\displaystyle\frac{M}{mM}\right).$

We have that 
$$L\otimes_S M\cong L\otimes_K\displaystyle\frac{S}{m}\otimes_S M\cong L\otimes_K \displaystyle\frac{M}{mM}. $$
Hence, $\mu_L(L\otimes_S M)=\mu_L(L\otimes_K \displaystyle\frac{M}{mM}).$ It is easy to see that $$\mu_L(L\otimes_K \displaystyle\frac{M}{mM})=\mu_K\left(\displaystyle\frac{M}{mM}\right),$$

which completes the proof.
\end{proof}
Corollary~\ref{stab} and Theorem~\ref{flat} give us the following result

\begin{Thm}\label{main}
Let $k$ be a field of characteristic $p$, $S=k[[x_1,\ldots,x_n]]$ and $q=p^e$, for $e\geq 0$.
Let $I\leq S$ be a square-free monomial ideal in $S$
and $R=S/I$ its Stanley-Reisner ring. 
Then,
\begin{center}
	$\{c_{e}\}_{e\geq 0}=\{0,\mu +1,\mu,\mu,\mu,\ldots\},$ 
\end{center}
where $\mu : = \mu_S(J_p).$
\end{Thm}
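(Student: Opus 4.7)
The plan is to reduce to Corollary~\ref{stab} by stripping away the variables that do not appear in any minimal prime of $I$ and then invoking the flat-descent statement Theorem~\ref{flat}.

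I will write the minimal primary decomposition $I = I_{\alpha_1} \cap \cdots \cap I_{\alpha_r}$ and let $V := \bigcup_{k=1}^{r} \operatorname{supp}(\alpha_k) \subseteq \{1,\ldots,n\}$. Any variable $x_i$ with $i \notin V$ fails to appear in any $I_{\alpha_k}$ and therefore does not occur in any minimal monomial generator of $I$, so $I$ is already generated by monomials in $\{x_j : j \in V\}$. I then set $T := k[[x_j : j \in V]]$, viewed as a subring of $S$; the inclusion $T \hookrightarrow S$ is a flat local extension of regular local rings because $S$ is a power series ring over $T$ in the remaining variables, and $IS = I$. Writing $R' := T/I$, I will check that $\sum_{k}(I_{\alpha_k} \cap T)$ equals the maximal ideal of $T$, so the simplicial complex associated to $R'$ has no isolated vertices and Corollary~\ref{stab} applies to $R'$.

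Next, Corollary~\ref{stab} gives $\{c_{e,R'}\}_{e\geq 0} = \{0, \mu' + 1, \mu', \mu', \ldots\}$ with $\mu' := \mu_T(J_p)$ computed inside $T$, while Theorem~\ref{flat} applied to the flat extension $T \hookrightarrow S$ and the ideal $I \subseteq T$ yields $c_{e,R} = c_{e,R'}$ for every $e \geq 0$. Combining these two facts, the sequence $\{c_{e,R}\}_{e\geq 0}$ already has the shape claimed in the theorem, and it only remains to identify $\mu_S(J_p)$ with $\mu_T(J_p)$.

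This last identification is automatic from Remark~\ref{lcm}: the minimal monomial generators of $(I^{[p]} :_S I)$ are lcm's involving only the variables indexed by $V$, so the minimal monomial generators of $J_p$ computed in $S$ and in $T$ are literally the same monomials, and minimality of a set of monomial generators is a purely combinatorial divisibility condition on exponent vectors, independent of the ambient regular local ring. The bulk of the genuine work has already been absorbed by Corollary~\ref{stab} and Theorem~\ref{flat}; the only real obstacle here is spotting that the correct flat extension to invoke is the ``add back the isolated variables'' inclusion $T \hookrightarrow S$, after which the rest is bookkeeping.
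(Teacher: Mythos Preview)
Your proposal is correct and follows essentially the same approach as the paper: both arguments pass to the subring $T=k[[x_j:j\in V]]$ on the variables that actually occur in the minimal primes, observe that $T\hookrightarrow S$ is a flat local extension of regular local rings, invoke Corollary~\ref{stab} on $T/I$ (where the no-isolated-vertices hypothesis holds), and then transfer the complexity sequence to $R=S/I$ via Theorem~\ref{flat}. Your write-up is in fact more explicit than the paper's---you spell out why $I$ lives in $T$, why the complex for $T/I$ has no isolated vertices, and why $\mu_S(J_p)=\mu_T(J_p)$, the last of which the paper only records as a remark after the proof.
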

\begin{proof}
Let $I_{\alpha_1}+ I_{\alpha_2}+\cdots+ I_{\alpha_r}=(x_1,\ldots, x_m),$ where $1\leq m<n$. Since $k[[x_1,\ldots,x_m]]\subseteq k[[x_1,\ldots,x_n]]$ is a flat local extension of regular local rings, we are under the hypothesis of Theorem~\ref{flat}. Corollary~\ref{stab} combined with Theorem~\ref{flat} give us the desired conclusion.
\end{proof}
\begin{Rem}
Using the notations in the proof of~\ref{main}, one can notice that 
$\mu_{k[[x_1,\ldots,x_m]]}(J_p)=\mu_S(J_p).$
\end{Rem}

In~\cite{Alvarez}, \`Alvarez Montaner defined the generating function of a skew $R$-algebra using the complexity sequence. 

\begin{Def}{(see Definition 2.1 in~\cite{Alvarez})}
The generating function of $\mathcal{F}(E_R)$ is defined as 
\begin{center}
    $\mathcal{G}_{\mathcal{F}(E_R)}(T)=\displaystyle\sum_{e\geq 0}c_e T^e.$
\end{center}
\end{Def}
Note that in~\cite{Alvarez} the author takes $c_0=1.$

As a consequence of Theorem~\ref{main}, we obtain the generating function of the Frobenius algebra of operators on the injective hull of the residue field of any Stanley-Reisner ring.
\begin{Cor}
Let $k$ be a field of characteristic $p$, $S=k[[x_1,\ldots,x_n]]$ and $q=p^e$, for $e\geq 0$.
Let $I\leq S$ be a square-free monomial ideal in $S$, $R=S/I$ its Stanley-Reisner ring.

Then the generating function of the Frobenius algebra of operators is
\begin{center}
	$\mathcal{G}_{\mathcal{F}(E_R)}(T)=(\mu+1)T+\displaystyle\sum_{e\geq 2}\mu T^e =\displaystyle\frac{(\mu+1)T -T^2}{1-T}.$ 
\end{center}

\end{Cor}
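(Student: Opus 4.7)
The plan is to observe that this corollary is a direct computation from Theorem~\ref{main}. By that theorem the complexity sequence is $\{c_e\}_{e\geq 0}=\{0,\mu+1,\mu,\mu,\mu,\ldots\}$, so I simply substitute these values into the definition $\mathcal{G}_{\mathcal{F}(E_R)}(T)=\sum_{e\geq 0} c_e T^e$. The $e=0$ term vanishes (the present paper's convention is $c_0=0$, unlike the convention $c_0=1$ used in~\cite{Alvarez} that the remark flags), the $e=1$ term contributes $(\mu+1)T$, and every $e\geq 2$ term contributes $\mu T^e$. This immediately gives the first equality $\mathcal{G}_{\mathcal{F}(E_R)}(T)=(\mu+1)T+\sum_{e\geq 2}\mu T^e$.

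For the second equality I would recognize the tail as a shifted geometric series in the formal power series ring $k[[T]]$, so $\sum_{e\geq 2}\mu T^e = \mu T^2 \cdot \sum_{e\geq 0} T^e = \dfrac{\mu T^2}{1-T}$. Putting the two terms over the common denominator $1-T$ and expanding the numerator
$$(\mu+1)T(1-T)+\mu T^2=(\mu+1)T-(\mu+1)T^2+\mu T^2=(\mu+1)T-T^2$$
yields the claimed closed form $\dfrac{(\mu+1)T-T^2}{1-T}$.

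There is no serious obstacle here: Theorem~\ref{main} supplies all the content, and the corollary is a one-line substitution together with a geometric-series summation. The only points worth stating explicitly in the write-up are (i) which convention for $c_0$ is in force, so that the reader matches the sequence $\{0,\mu+1,\mu,\mu,\ldots\}$ rather than $\{1,\mu+1,\mu,\mu,\ldots\}$, and (ii) that the identity is to be read in $k[[T]]$ so no analytic convergence issue arises when writing $\frac{1}{1-T}=\sum_{e\geq 0} T^e$.
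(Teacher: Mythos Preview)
Your proposal is correct and follows essentially the same approach as the paper: both invoke Theorem~\ref{main} to read off $c_0=0$, $c_1=\mu+1$, and $c_e=\mu$ for $e\geq 2$, and then substitute into the definition of the generating function. Your write-up is in fact more explicit than the paper's, which omits the geometric-series simplification; the only cosmetic point is that the ambient formal power series ring is more naturally $\mathbb{Z}[[T]]$ (or $\mathbb{Q}[[T]]$) than $k[[T]]$, since the $c_e$ are integers and $k$ already denotes the residue field.
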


\begin{proof}
Note that by Definition~\ref{complexity}, $c_0=0.$ Using the Theorem~\ref{main}, we have that $c_1(R)=\mu +1$ and $c_e=\mu$, for every $e\geq 2.$
\end{proof}
\bigskip
\section*{Acknowledgements} I would like to thank my advisor, Florian Enescu, for his constant support and guidance and Yongwei Yao for useful comments.

\end{document}